\documentclass[11pt,reqno]{amsart}

\usepackage{amsmath,amssymb,amsthm,mathtools,mathrsfs}
\usepackage{enumitem}
\usepackage{needspace}
\usepackage{microtype}
\usepackage[hidelinks]{hyperref}
\usepackage[nameinlink,capitalise]{cleveref}
\hypersetup{pdftitle={Moments of the Cross-Sectional Siegel--Veech Transform},
 pdfauthor={Albert Artiles}}

\newcommand{\R}{\mathbb R}
\newcommand{\Z}{\mathbb Z}
\newcommand{\N}{\mathbb N}
\newcommand{\G}{\mathrm{SL}(2,\R)}
\newcommand{\one}{\mathbf 1}
\newcommand{\prim}{\mathrm{prim}}
\newcommand{\Hol}{\operatorname{Hol}}
\newcommand{\supp}{\operatorname{supp}}

\newcommand{\detm}{\operatorname{det}}

\newcommand{\cF}{\mathcal F}

\newcommand{\cA}{\mathcal A}

\newcommand{\Om}{\Omega}
\newcommand{\Lam}{\Lambda}
\newcommand{\Gam}{\Gamma}
\newcommand{\ph}{\varphi}

\newcommand{\Id}{\operatorname{Id}}

\theoremstyle{plain}
\newtheorem{theorem}{Theorem}[section]

\newtheorem{lemma}[theorem]{Lemma}
\newtheorem{corollary}[theorem]{Corollary}

\theoremstyle{definition}

\theoremstyle{remark}
\newtheorem{remark}[theorem]{Remark}

\title[Moments of the Cross-Sectional Siegel--Veech Transform]
{Moments of the Cross-Sectional Siegel--Veech Transform}

\author{Albert Artiles}
\address{Yau Mathematical Sciences Center, Tsinghua University, Beijing, China}
\email{artilesa@mail.tsinghua.edu.cn}

\subjclass[2020]{37A17, 37A25, 11B57, 11H06}
\keywords{Siegel--Veech transform, translation surface, lattice surface, horocycle flow, Poincar\'e section, factorial moments, Farey fractions, BCZ map}

\begin{document}

\begin{abstract}
We study the Siegel--Veech transform on the Poincar\'e section for the horocycle flow consisting of lattice surfaces with a visible horizontal holonomy vector of length at most one. We compute its first moment and derive formulas for higher and factorial moments with respect to the section's induced ergodic invariant probability measure, which is not supported on a periodic orbit. We emphasize these formulas in the case of the square torus of unit area and, as an application, use its first moment formula to recover the Boca--Zaharescu pair-correlation density for Farey fractions.

\end{abstract}

\maketitle

\section{Introduction}

The Siegel transform is one of the basic tools in the geometry of numbers. Given an integer $d\geq2$ and a bounded compactly supported Borel function on $\R^d$, one obtains a function on the space of unimodular lattices by summing over the nonzero lattice vectors. Siegel's mean value theorem \cite{Siegel1945} tells us that the first moment of this transform can be obtained by integrating the original function with respect to Lebesgue measure. Rogers developed higher-moment formulas \cite{Rogers1955}, and Schmidt obtained related metrical counting results \cite{Schmidt1960}. 
These identities turn questions about the statistics of lattice vectors into integration problems on homogeneous spaces.

For translation surfaces, the analogous construction is the Siegel--Veech transform. A translation surface carries discrete collections of holonomy vectors coming from saddle connections and cylinders. Masur proved quadratic bounds for the number of saddle connections of length at most $R$ \cite{Masur1988,Masur1990}. Veech placed these counting problems in an ergodic framework and established the Siegel--Veech mean value formula for equivariant closed discrete sets \cite{Veech1998}. Eskin and Masur used this framework to obtain asymptotic counting formulas for flat surfaces \cite{EskinMasur2001}. The measure-classification theorems of Eskin--Mirzakhani and Eskin--Mirzakhani--Mohammadi later described the invariant measures and orbit closures that arise naturally in these averaging problems \cite{EskinMirzakhani2018,EskinMirzakhaniMohammadi2015}.

Higher moments record correlations among several holonomy vectors. Athreya--Cheung--Masur proved that Siegel--Veech transforms of bounded compactly supported functions belong to $L^2$ with respect to Masur--Veech measure on connected components of area-one strata \cite{AthreyaCheungMasur2019}. For lattice orbits, Fairchild computed higher moments of Siegel--Veech transforms for Hecke triangle groups \cite{Fairchild2021}. More recently, Burrin and Fairchild proved a two-orbit Siegel--Veech formula for general discrete lattice orbits in $\R^2$ and applied it to pairs of saddle connections on Veech surfaces \cite{BurrinFairchild2024}. The formulas of Siegel, Rogers, Fairchild, and Burrin--Fairchild average over the full homogeneous space. The purpose of the present paper is to study the corresponding moment problem after restricting the Siegel--Veech transform to a Poincar\'e section for the horocycle flow.

\subsection*{The Poincar\'e section and the cross-sectional transform}


Let $(X,\omega)$ be a lattice surface with Veech group $\Gamma<\G$, and let $\Lambda\subset\R^2\setminus\{0\}$ be the set of visible holonomy vectors of $(X,\omega)$. We identify the $\G$-orbit of $(X,\omega)$ with $\G/\Gam$.

The Poincar\'e section used throughout this paper is the set of translation surfaces in $\G/\Gamma$ having a visible horizontal holonomy vector of length at most one,
\begin{equation}\label{eq:intro-section-abstract}
 \mathcal L
 =\left\{g\Gam\in\G/\Gam:
 g\Lam\cap\bigl((0,1]\times\{0\}\bigr)\neq\varnothing\right\}.
\end{equation}
For $\Gam=\mathrm{SL}(2,\Z)$, Athreya and Cheung showed that $\mathcal{L}$ is a Poincar\'e section for the horocycle flow and that its first return map is equivalent to the Boca--Cobeli--Zaharescu (BCZ) map \cite{AthreyaCheung2014}. The BCZ map was first introduced by Boca, Cobeli, and Zaharescu in their study of the statistics of Farey fractions \cite{BocaCobeliZaharescu2001}. Athreya--Chaika--Leli\`evre studied the analogous section for the golden $L$ \cite{AthreyaChaikaLelievre2015}. Uyanik--Work parameterized the section for Veech surfaces with finitely many cusps \cite{UyanikWork2016}, generalizing the constructions of Athreya--Cheung and Athreya--Chaika--Leli\`evre. The author later used this section to prove weak mixing of the return map for lattice surfaces \cite{ArtilesWeakMixing}, extending the weak-mixing theorem of Cheung and Quas for the BCZ map \cite{CheungQuas2024}.

For simplicity, we assume in the main statements that $\G/\Gam$ has one cusp and that $-\Id\in\Gam$. We discuss the other parabolic normal forms and the case of several cusps in \cref{rem:minus-id,rem:several-cusps}. After applying an element of $\G$ to $\omega$, we may assume that $e_1=(1,0)^T$ is a visible holonomy vector. Let $a\in\R$, and write
\[
 h_a=\begin{pmatrix}1&a\\0&1\end{pmatrix}.
\]
The \emph{cusp width} is
\[
 \alpha
 =\min\{a>0:h_a\in\Gam\text{ and }h_ae_1=e_1\}.
\]
Equivalently, $h_\alpha$ generates the vector stabilizer $\Gam_{e_1}=\{g\in\Gam:ge_1=e_1\}$.

For $s\neq0$ and $t\in\R$, set
\[
 p_{s,t}=\begin{pmatrix}s&t\\0&s^{-1}\end{pmatrix}.
\]
The section coordinates of Athreya--Chaika--Leli\`evre \cite{AthreyaChaikaLelievre2015}, in the general form given by Uyanik--Work \cite{UyanikWork2016}, parameterize $\mathcal L$ by
\begin{equation}\label{eq:intro-section-triangle}
 \Om_\alpha
 =\{(s,t)\in\R^2:0<s\leq1,\ 1-\alpha s<t\leq1\},
 \qquad
 (s,t)\longleftrightarrow p_{s,t}\Gam,
\end{equation}
with invariant probability measure
\begin{equation}\label{eq:intro-section-measure}
 dm_\alpha(s,t)=\frac{2}{\alpha}\,ds\,dt.
\end{equation}

Denote the set of visible holonomy vectors of $\omega$ with positive height by $\Lam_+$, that is,
\[
 \Lam_+=\{v\in\Lam:\pi_2(v)>0\}.
\]
For a bounded compactly supported Borel function $f$ on $\R\times(0,\infty)$, we define the positive cross-sectional Siegel--Veech transform by
\begin{equation}\label{eq:intro-positive-transform}
 \widehat f_+(s,t)
 =\sum_{v\in\Lam_+}f(p_{s,t}v).
\end{equation}
We collect the heights of holonomy vectors of $(X,\omega)$ in the set
\begin{equation}\label{eq:intro-J}
 J=\pi_2(\Lam)\cap(0,\infty)
\end{equation}
and, extending the geometric Euler totient function introduced for Hecke triangle groups in \cite[Definition~1.4]{Fairchild2021}, define
\begin{equation}\label{eq:intro-phi}
 \ph(\zeta)
 =\#\{x\in\R:(x,\zeta)\in\Lam,\ 0\leq x<\alpha\zeta\},
 \qquad \zeta\in J.
\end{equation}

We refer to the quantities $\alpha$, $J$, and $\varphi$ as the cusp data of $(X,\omega)$.

Consider the cumulative sum of the geometric Euler totient function, which we denote by
\begin{equation}\label{eq:intro-cumulative}
 \Phi_\omega(y)
 =\sum_{\substack{\zeta\in J\\\zeta\leq y}}\ph(\zeta).
\end{equation}

Our first main result computes the first cross-sectional moment for a Veech surface.

\begin{theorem}\label{thm:intro-main-first}
Let $(X,\omega)$ be as above. Then for every bounded, compactly supported Borel function $f:\R\times(0,\infty)\to\R$,
\begin{equation}\label{eq:intro-main-first}
 \int_{\Om_\alpha}\widehat f_+\,dm_\alpha
 =\frac{2}{\alpha}
 \sum_{\zeta\in J}\ph(\zeta)
 \int_\zeta^\infty\int_\R
 f(x,y)\,\frac{1}{y^2}\,dx\,dy.
\end{equation}
Equivalently,
\begin{equation}\label{eq:intro-main-first-density}
 \int_{\Om_\alpha}\widehat f_+\,dm_\alpha
 =\frac{2}{\alpha}
 \int_0^\infty\int_\R
 f(x,y)\frac{\Phi_\omega(y)}{y^2}\,dx\,dy.
\end{equation}
\end{theorem}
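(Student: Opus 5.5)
Unfolding is the natural route: decompose $\Lam_+$ into orbits of the cusp stabilizer $\Gam_{e_1}=\langle h_\alpha\rangle$ (together with $-\Id$, which only flips signs and does not preserve positive height). Since $h_\alpha$ fixes heights and translates the first coordinate by $\alpha\zeta$ at height $\zeta$, every $v\in\Lam_+$ with $\pi_2(v)=\zeta$ can be written uniquely as $h_\alpha^k u$ with $k\in\Z$ and $u=(x_0,\zeta)$, $0\le x_0<\alpha\zeta$. This requires $\Lam$ to be $\Gam$-invariant, which holds because $\Gam$ is the Veech group. Hence $\Lam_+$ splits into $\sum_{\zeta\in J}\ph(\zeta)$ many $\langle h_\alpha\rangle$-orbits, and $\ph(\zeta)$ is finite by discreteness of $\Lam$. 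The transform then becomes
\[
\widehat f_+(s,t)=\sum_{\zeta\in J}\sum_{0\le x_0<\alpha\zeta}\sum_{k\in\Z} f\bigl(p_{s,t}h_\alpha^k(x_0,\zeta)^T\bigr).
\]

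Next I compute each orbit directly. We have $p_{s,t}(x_0+k\alpha\zeta,\zeta)^T=(s x_0+sk\alpha\zeta+t\zeta,\ \zeta/s)$. Fix one orbit and integrate over $\Om_\alpha$ with respect to $\frac2\alpha\,ds\,dt$. For fixed $s$, the variable $t$ ranges over an interval of length $\alpha s$, so the first coordinate $t\zeta$ sweeps an interval of length $\alpha s\zeta$. This is exactly the period of the shift $k\mapsto sk\alpha\zeta$. The sum over $k$ combined with the $t$-integral therefore tiles $\R$:
\[
\int_{1-\alpha s}^{1}\sum_k f(sx_0+sk\alpha\zeta+t\zeta,\zeta/s)\,dt=\frac1\zeta\int_\R f(x,\zeta/s)\,dx .
\]
The answer is independent of $x_0$, which is why only the count $\ph(\zeta)$ survives. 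Now substitute $y=\zeta/s$, so $ds=\zeta\,dy/y^2$ and $s\in(0,1]$ corresponds to $y\in[\zeta,\infty)$. The orbit contributes $\frac2\alpha\int_\zeta^\infty\int_\R f(x,y)\,y^{-2}\,dx\,dy$, and summing over orbits gives \eqref{eq:intro-main-first}. Interchanging the sum over $\zeta$ with the integral, the indicator $\zeta\le y$ assembles $\Phi_\omega(y)$ and yields \eqref{eq:intro-main-first-density}.

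The exchanges of sums and integrals have to be justified. Apply everything first to $|f|$ and use Tonelli. Finiteness follows because $f$ is supported in some box $|x|\le R$, $\delta\le y\le R$. Then only $\zeta\le R$ contribute, and since $J$ is discrete and each $\ph(\zeta)<\infty$, there are finitely many orbit classes. Each term is at most $\|f\|_\infty\cdot 2R\cdot\delta^{-1}$ up to constants. Fubini then applies to $f$ itself.

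The main obstacle is the bookkeeping of the orbit decomposition. Three points need care. First, $\ph(\zeta)$ must count orbit representatives exactly, which uses $h_\alpha(x,\zeta)=(x+\alpha\zeta,\zeta)$ and the minimality of $\alpha$. Second, the half-open ranges $0\le x_0<\alpha\zeta$ and $1-\alpha s<t\le1$ must give an exact tiling of $\R$ with no double counting. Third, $-\Id$ must play no role on $\Lam_+$. Beyond that, the identification of $\mathcal L$ with $\Om_\alpha$ and the measure $m_\alpha$ are taken from Uyanik--Work as stated. The proof itself only integrates over the coordinates $(s,t)$, so it needs no further properties of the section.
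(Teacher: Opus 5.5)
Your proposal is correct and follows essentially the same route as the paper: you split $\Lam_+$ by height into $\langle h_\alpha\rangle$-orbits with representatives in $[0,\alpha\zeta)$, unfold the $t$-integral against the shift $q\mapsto q+1$ to get $\frac1\zeta\int_\R f(x,\zeta/s)\,dx$, substitute $y=\zeta/s$, and handle signed $f$ via $|f|$. The one step you state too quickly is the finiteness of $J\cap(0,R]$, since calling $J$ discrete does not by itself rule out accumulation at $0$; the paper gets this finiteness from your orbit representatives, which all lie in the compact box $[0,\alpha R]\times[0,R]$, where the closed discrete set $\Lam$ has only finitely many points.
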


The density in \eqref{eq:intro-main-first-density} depends only on the height $y$. Its value is determined by the cusp data through the sum $\Phi_\omega(y)$. This differs from the classical Siegel--Veech formula, where the density is constant. 

\subsection*{The modular case}

Let
\[
 \Gam=\mathrm{SL}(2,\Z) 
 \qquad \text{ and }
 \Lam=\Z^2_{\prim}.
\]
Here $\Gamma$ is the Veech group of the square torus with one marked point, and $\Z_{\prim}^2$ is its set of holonomy vectors.

Then $\alpha=1$ and the section is the BCZ triangle
\begin{equation}\label{eq:intro-bcz-triangle}
 \Om
 =\{(s,t)\in\R^2:0<s\leq1,\ 0<t\leq1,\ s+t>1\},
 \qquad dm=2\,ds\,dt.
\end{equation}
We write the set of primitive integer vectors with positive height as
\[
 \Z^2_{\prim,+}
 =\{(A,j)\in\Z^2:j\geq1,\ \gcd(A,j)=1\}.
\]

Using the notation above, we have
\[
 J=\N,
 \qquad
 \ph(j)=\phi(j),
\]
where $\phi$ is the classical Euler totient function from number theory. \Cref{thm:intro-main-first} therefore gives the following formula.

\begin{theorem}\label{thm:intro-main-modular-first}
Let $\Gam=\mathrm{SL}(2,\Z)$, $\Lam=\Z^2_{\prim}$, and let $\Om$ be the BCZ triangle with probability measure $dm=2\,ds\,dt$. Then, for every bounded compactly supported Borel function $f:\R\times(0,\infty)\to\R$,
\begin{equation}\label{eq:intro-main-modular-first}
 \int_\Om\widehat f_+\,dm
 =2\sum_{j\geq1}\phi(j)
 \int_j^\infty\int_\R
 f(x,y)\,\frac{1}{y^2}\,dx\,dy.
\end{equation}
Equivalently,
\begin{equation}\label{eq:intro-main-modular-density}
 \int_\Om\widehat f_+\,dm
 =2\int_0^\infty\int_\R
 f(x,y)\frac{1}{y^2}
 \left(\sum_{1\leq j\leq y}\phi(j)\right)dx\,dy.
\end{equation}
\end{theorem}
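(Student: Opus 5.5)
The plan is to obtain \cref{thm:intro-main-modular-first} as a direct specialization of \cref{thm:intro-main-first}. The work consists of checking that the square torus with one marked point fits the standing hypotheses, and then identifying its cusp data $\alpha$, $J$, $\ph$.

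\textbf{Step 1: the hypotheses hold.} Here $\Gam=\mathrm{SL}(2,\Z)$ has one cusp and contains $-\Id$. The vector $e_1=(1,0)^T$ is primitive, so it lies in $\Lam=\Z^2_{\prim}$. The vector stabilizer $\Gam_{e_1}$ consists of the matrices $h_a$ with $a\in\Z$. Its minimal positive element is $h_1$, so $\alpha=1$.

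\textbf{Step 2: the section.} Substituting $\alpha=1$ into \eqref{eq:intro-section-triangle} gives
\[
\Om_1=\{0<s\le 1,\ 1-s<t\le 1\}.
\]
Since $0<s\le1$, the condition $t>1-s$ forces $t>0$. Hence $\Om_1$ is exactly the BCZ triangle \eqref{eq:intro-bcz-triangle}. Likewise $dm_1=2\,ds\,dt=dm$.

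\textbf{Step 3: the cusp data.} Every integer $j\ge1$ is the height of the primitive vector $(1,j)$, and every height in $\Z^2$ is an integer. Therefore $J=\N$. For $j\in\N$, definition \eqref{eq:intro-phi} counts the integers $x$ with $0\le x<j$ and $\gcd(x,j)=1$. This is exactly $\phi(j)$, the classical totient.

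For $j=1$, only $x=0$ qualifies, and $\gcd(0,1)=1$. So $\ph(1)=1=\phi(1)$. For $j\ge2$, $x=0$ is excluded because $\gcd(0,j)=j$. The remaining range $1\le x<j$ is the usual range of the totient.

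\textbf{Step 4: substitute.} Putting these values into \eqref{eq:intro-main-first} gives \eqref{eq:intro-main-modular-first}. Putting them into \eqref{eq:intro-main-first-density}, with $\Phi_\omega(y)=\sum_{1\le j\le y}\phi(j)$, gives \eqref{eq:intro-main-modular-density}.

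The only point needing care is the bookkeeping in Step 3. One must confirm that the half-open window $0\le x<\alpha\zeta$ counts each $\Gam_{e_1}$-orbit of height-$j$ vectors exactly once. That holds because $h_1$ translates $(x,j)$ to $(x+j,j)$, so each orbit has exactly one representative in the window. One must also handle the edge case $j=1$, done above.

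If one preferred not to rely on \cref{thm:intro-main-first}, there is a direct argument. Write $\Z^2_{\prim,+}$ as the union over $j\ge1$ and residues $A \bmod j$ with $\gcd(A,j)=1$ of the orbits $\{(A+kj,j):k\in\Z\}$. Then $p_{s,t}(A+kj,j)=(s(A+kj)+tj,\,j/s)$. Unfolding the sum over $k$ against the integral over $t$ turns the $t$-integral into an integral over all of $\R$ in the first coordinate. Changing variables $y=j/s$ and $x=s(A+tj)$ gives the Jacobian factor $1/(j\cdot s)\cdot j/s^{2}\cdots$, which produces $dx\,dy/y^2$, with $s\le1$ becoming $y\ge j$. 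This unfolding, specifically matching the $t$-range $(1-s,1]$ of length $s$ with one period of the translation $k\mapsto k+1$, is the main technical step. It is exactly what \cref{thm:intro-main-first} already encapsulates.
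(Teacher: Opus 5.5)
Your proposal is correct and follows the paper's proof, which likewise obtains the result by substituting $\alpha=1$, $J=\N$ and $\ph=\phi$ into the general first-moment theorem; your explicit checks that $\Om_1$ is the BCZ triangle and that $\ph(1)=\phi(1)=1$ spell out what the paper leaves implicit. One small slip in your optional direct sketch: the substitution should be $x=s(A+kj)+tj$ (not $s(A+tj)$), and its Jacobian $|\partial(x,y)/\partial(s,t)|=j^2/s^2=y^2$ is what gives $ds\,dt=y^{-2}\,dx\,dy$.
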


Our third main result is the second moment on the modular section. For a bounded compactly supported Borel function $F$ on $(\R\times(0,\infty))^2$, define
\begin{equation}\label{eq:intro-pair-transform}
 \widehat F^{(2)}(s,t)
 =\sum_{v,w\in\Z^2_{\prim,+}}F(p_{s,t}v,p_{s,t}w).
\end{equation}
For $j,k\in\N$ and $n\in\Z$, define
\begin{equation}\label{eq:intro-Phi-jkn}
 \Phi_{j,k}(n)
 =\#\left\{(a,b)\in(\Z/j\Z)^\times\times(\Z/k\Z)^\times:
 ak-bj\equiv n\pmod{jk}\right\}.
\end{equation}
When $j=1$ or $k=1$, the corresponding unit group is understood to contain its unique residue class.

\begin{theorem}\label{thm:intro-main-second}
Let $(X,\omega)$ be the unit area square torus. For every bounded compactly supported Borel function $F:(\R\times(0,\infty))^2\to\R$,
\begin{align}\label{eq:intro-main-second}
 \int_\Om\widehat F^{(2)}\,dm
 &=2\sum_{j,k\geq1}\sum_{n\in\Z}
 \Phi_{j,k}(n)
 \int_j^\infty\int_\R
 F\left(
 (x,y),
 \left(\frac{k}{j}x-\frac{n}{y},\frac{k}{j}y\right)
 \right)\frac{1}{y^2}\,dx\,dy.
\end{align}

\end{theorem}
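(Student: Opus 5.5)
The plan is to unfold the integral pair by pair, change variables from the section coordinates $(s,t)$ to the coordinates $(x,y)$ of the first vector, and then regroup the pairs $(v,w)$ according to their heights $(j,k)$ and the determinant $n$. We write $v=(A,j)$ and $w=(B,k)$ with $j,k\geq1$ and $\gcd(A,j)=\gcd(B,k)=1$. Then
\[
 p_{s,t}v=(sA+tj,\,j/s).
\]
For fixed $v$ we set $x=sA+tj$ and $y=j/s$. The Jacobian of $(s,t)\mapsto(x,y)$ is $|x_t y_s|=j\cdot j/s^2=y^2$, so $ds\,dt=dx\,dy/y^2$. In these coordinates the second vector becomes
\[
 p_{s,t}w=(sB+tk,\,k/s)=\left(\tfrac{k}{j}x-\tfrac{n}{y},\,\tfrac{k}{j}y\right),
 \qquad n=Ak-Bj,
\]
using $s=j/y$. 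This reproduces exactly the integrand in \eqref{eq:intro-main-second}.

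Next we identify the image region. Fix $j$ and a residue class of $A$ modulo $j$, and fix $s\in(0,1]$. As $t$ ranges over $(1-s,1]$, $x$ ranges over the half-open interval $(sA+j(1-s),\,sA+j]$, which has length $js$. Replacing $A$ by $A+j$ shifts this interval by exactly $js$. Hence, as $A$ runs over its class modulo $j$, these intervals tile $\R$ disjointly, while $y=j/s$ ranges over $[j,\infty)$. The key observation is that the simultaneous shift $(A,B)\mapsto(A+j,B+k)$ preserves $n=Ak-Bj$. Therefore, for fixed $j,k$, we group the pairs $(v,w)$ into orbits of this diagonal shift. Summing over each orbit turns the integral over $\Om$ into a single integral over $\{y\geq j,\ x\in\R\}$ of $F\bigl((x,y),(\tfrac kjx-\tfrac ny,\tfrac kjy)\bigr)\,y^{-2}$, multiplied by the factor $2$ from $dm$.

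It then remains to count the orbits with given $j,k,n$ and show there are exactly $\Phi_{j,k}(n)$ of them. We map an orbit to $(A\bmod j,\,B\bmod k)$. Primitivity of $v$ and $w$ means these residues are units, and $Ak-Bj=n$ gives $ak-bj\equiv n\pmod{jk}$.
\begin{itemize}[label=$\circ$]
\item \emph{Injectivity.} Suppose $(A+mj,\,B+lk)$ also has determinant $n$. Then $(m-l)jk=0$, so $m=l$ and the two pairs lie in the same orbit.
\item \emph{Surjectivity.} Take units $a\bmod j$ and $b\bmod k$ with $ak-bj\equiv n \pmod{jk}$, and lift them to $A=a+mj$, $B=b+lk$. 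Then $Ak-Bj=ak-bj+(m-l)jk$, and choosing $m-l$ appropriately gives exactly $n$.
\item \emph{Degenerate cases.} When $j=1$ or $k=1$, the convention in \eqref{eq:intro-Phi-jkn} matches the fact that every integer is coprime to $1$.
\end{itemize}

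The step I expect to require the most care is justifying the interchange of sum and integral and the tiling argument with its half-open endpoint conventions. For the interchange, compact support of $F$ forces $y$ and $\tfrac kj y$ to be bounded. Together with $y\geq j$, this bounds $j$ and $k$. Bounded $x$ and bounded $\tfrac kjx-\tfrac ny$ then bound $n$. So only finitely many $(j,k,n)$ contribute, the integrand is bounded on a bounded region, and Fubini--Tonelli applies, first to $|F|$ and then to $F$. The tiling must be checked with the same endpoint conventions as the half-open description of $\Om$, so that the regrouping is an exact partition and not merely one up to null sets.
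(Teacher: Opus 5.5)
Your proposal is correct and follows essentially the paper's proof. The paper likewise writes $A,B$ as unit residues plus a common shift $\ell$ with $A_\ell=a+(\ell+d)j$ and $B_\ell=b+\ell k$; these are exactly your diagonal-shift orbits. It then counts the residue pairs by $\Phi_{j,k}(n)$, tiles $\R$ by the intervals $(sA+j(1-s),sA+j]$, substitutes $y=j/s$, and handles signed $F$ via finitely many contributing $(j,k,n)$. The only cosmetic difference is that you do the change of variables in one step with Jacobian $y^2$, whereas the paper integrates in $t$ first and then in $s$.
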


In \cref{sec:higher}, we extend the same argument to higher moments for one-cusp Veech surfaces. We also compute factorial moments by restricting to tuples of pairwise distinct vectors.

The modular formulas also connect the cross-sectional transform to Farey fractions. The BCZ map is the first return map of the horocycle flow to \eqref{eq:intro-section-abstract}. This correspondence allows us to use the first moment in \cref{thm:intro-main-modular-first} to recover the classical Boca--Zaharescu Farey pair-correlation function.


\section{Preliminaries}\label{sec:prelim}

We fix notation and recall the background used throughout the paper.

\subsection{Translation surfaces and lattice surfaces}

A translation surface is a pair $(X,\omega)$, where $X$ is a compact Riemann surface and $\omega$ is a nonzero holomorphic $1$-form on $X$. We often write $\omega$ for the translation surface and keep $X$ implicit. A convenient geometric way to picture a translation surface is as a finite disjoint union of polygons
\[
P=\bigsqcup_{i=1}^m P_i\subset\mathbb C\simeq\R^2
\]
whose sides are partitioned into pairs and identified by Euclidean translations. Gluing paired sides produces a closed surface, and the form $dz$ descends to a holomorphic $1$-form. Conversely, every translation surface admits a polygonal representation of this form. We refer to Zorich \cite{Zorich2006} for a detailed survey.

Let $\Sigma$ denote the set of zeros of $\omega$ in $X$. The form $\omega$ induces a flat metric on $X\setminus\Sigma$, with singularities at the points of $\Sigma$. For a torus, we include a marked regular point in $\Sigma$ so that saddle connections are defined in the same way. A saddle connection is a geodesic segment joining two points of $\Sigma$, not necessarily distinct, whose interior contains no point of $\Sigma$. If $\gamma$ is an oriented saddle connection, its holonomy vector is
\[
 \Hol_\omega(\gamma)=\int_\gamma\omega\in\mathbb C\simeq\R^2.
\]
Let $H_\omega$ be the set of holonomy vectors of all oriented saddle connections on $\omega$. This set is closed and discrete in $\R^2$, and $-H_\omega=H_\omega$.

There is a natural action of $\G$ on translation surfaces. If $\omega$ is represented by polygons $P\subset\R^2$ and $g\in \G$, then $gP$ defines a new translation surface denoted by $g\omega$. Holonomy vectors transform equivariantly:
\begin{equation}\label{eq:equivariance}
 H_{g\omega}=gH_\omega.
\end{equation}

For the cross section we use visible holonomy vectors. Define
\[
\Lam_\omega
 =\left\{v\in H_\omega:
 tv\notin H_\omega\text{ for every }0<t<1\right\}.
\]

In the case of the square torus with one marked point, the set of visible holonomy vectors is the full set of holonomy vectors and coincides with pairs of coprime integers:
\[
\Z^2_{\prim}=\{(a,b)\in\Z^2:\gcd(a,b)=1\}.
\]
We write $\Lam$ for the visible holonomy set when the surface is understood. It is closed, discrete, and centrally symmetric. Our transforms count each vector once, even if several saddle connections have the same holonomy vector.

The stabilizer of $\omega$ under the $\G$-action is called its Veech group and is denoted by $\Gam_\omega$.

If $\Gam_\omega$ is a lattice in $\G$, then $\omega$ is called a lattice surface or a Veech surface. The $\G$-orbit of $\omega$ is identified with the finite-volume homogeneous space
\[
 \G\omega\simeq \G/\Gam_\omega,
 \qquad \text{ via }g\omega\longleftrightarrow g\Gam_\omega.
\]
Veech \cite{Veech1989,Veech1998} showed that for a lattice surface, the set of visible holonomy vectors is a finite union of $\Gam$-orbits associated to the cusps of $\G/\Gam$.

Let $N(\omega,R)$ be the number of saddle connections of length at most $R$. Masur's lower bound \cite{Masur1988} and upper bound \cite{Masur1990} together give constants $c_1(\omega),c_2(\omega)>0$ such that
\[
 c_1(\omega)R^2\leq N(\omega,R)\leq c_2(\omega)R^2
\]
for all sufficiently large $R$. Veech obtained precise quadratic asymptotics for lattice surfaces \cite{Veech1989,Veech1998}. Eskin and Masur proved an almost-everywhere quadratic asymptotic with respect to Masur--Veech measure on each connected component of an area-one stratum \cite{EskinMasur2001}. Eskin--Masur--Zorich computed the constants for configurations of saddle connections and cylinders in terms of volumes of strata \cite{EskinMasurZorich2003}.


\subsection{The horocycle Poincar\'e section}\label{subsec:section}

Consider the one-parameter subgroup $\{u_r:r\in\R\}$ of $\G$, where
\[
 u_r=\begin{pmatrix}1&0\\-r&1\end{pmatrix},
 \qquad r\in\R.
\]
The elements $u_r$ act on $\G/\Gam$ by left multiplication. This action is called the horocycle flow.

For $h>0$, define the set of $h$-horizontally short surfaces by
\[
 \mathcal L_h
 =\left\{g\Gam\in \G/\Gam:
 g\Lam\cap\bigl((0,h]\times\{0\}\bigr)\neq\varnothing\right\}.
\]
We write $\mathcal L=\mathcal L_1$. For the modular surface, Athreya and Cheung proved that $\mathcal L$ is a Poincar\'e section for the horocycle flow and identified its return map with the BCZ map \cite{AthreyaCheung2014}. The same viewpoint was used by Athreya--Chaika--Leli\`evre in their study of the slope gap distribution of holonomy vectors of the golden $L$ \cite{AthreyaChaikaLelievre2015}. Uyanik--Work extended the construction to describe the corresponding section for an arbitrary lattice surface as a finite disjoint union of triangular regions \cite{UyanikWork2016} and provided an algorithm to compute the return time of the horocycle flow back to $\mathcal{L}$. Kumanduri--Sanchez--Wang refined the choice of section coordinates so that the computation uses only finitely many contributing saddle connections \cite{KumanduriSanchezWang2024}. They proved that the slope gap distribution has finitely many points of non-analyticity and that its tail has quadratic decay. Osman--Southerland--Wang proved effective equidistribution for intersections of long horocycles with these sections and used it to obtain effective slope gap distributions \cite{OsmanSoutherlandWang2025}. The author used the Uyanik--Work description of $\mathcal{L}$ and bounds on the cross-sectional Siegel--Veech transform along a one-parameter family of functions to prove that the return map for lattice surfaces is weakly mixing \cite{ArtilesWeakMixing}.

We recall the coordinates used to parameterize $\mathcal L$. Suppose first that $\G/\Gam$ has one cusp and $-\Id\in\Gam$. After acting on $\omega$ by some element of $\G$, we may suppose that $(1,0)^T\in \Lambda_\omega$. Then there exists $a>0$ such that
\[
 h_a=\begin{pmatrix}1&a\\0&1\end{pmatrix}\in\Gamma.
\]
We define the cusp width by
\[
 \alpha
 =\min\{a>0:h_a\in\Gam\text{ and }h_ae_1=e_1\}.
\]
The vector stabilizer $\Gam_{e_1}=\{g\in\Gam:ge_1=e_1\}$ is generated by $h_\alpha$. Consider the upper triangular matrices
\[
 p_{s,t}=\begin{pmatrix}s&t\\0&s^{-1}\end{pmatrix}.
\]
$\mathcal{L}$ is parameterized by
\begin{equation}\label{eq:section-triangle}
 \Om_\alpha
 =\{(s,t)\in\R^2:0<s\leq 1,\ 1-\alpha s<t\leq 1\},
 \qquad \text{ via }
 (s,t)\longleftrightarrow p_{s,t}\Gam.
\end{equation}
In these coordinates, the induced invariant probability measure $m_\alpha$ is given by the normalized Lebesgue measure
\begin{equation}\label{eq:section-measure}
 dm_\alpha(s,t)=\frac{2}{\alpha}\,ds\,dt.
\end{equation}

When $\Gam=\mathrm{SL}(2,\Z)$, we have $\alpha=1$ and recover the BCZ triangle
\begin{equation}\label{eq:bcz-triangle}
 \Om=\{(s,t):0<s\leq1,\ 0<t\leq1,\ s+t>1\},
 \qquad dm=2\,ds\,dt.
\end{equation}

If $-\Id\notin\Gam$, the coordinates also depend on the eigenvalue of a primitive generator of the stabilizer of the horizontal line. Uyanik--Work describe the two possibilities in \cite{UyanikWork2016}. We explain how the first moment formula applies in both cases in \cref{rem:minus-id}.

\subsection{The classical Siegel--Veech transform}\label{subsec:SV}

Let $\omega$ be a lattice surface with Veech group $\Gamma$. For $f\in C_c(\R^2)$, the Siegel--Veech transform of $f$ is given by
\begin{equation}\label{eq:SV-transform}
 \overline f(g\Gam)=\sum_{v\in\Lam}f(gv).
\end{equation}

Veech's formula states that for the $\G$-invariant probability measure $\mu$ on $\G/\Gamma$, there is a constant $c_{\omega}>0$ such that
\begin{equation}\label{eq:classical-SV}
 \int_{\G/\Gamma} \overline f\,d\mu
 =c_{\omega}\int_{\R^2}f(x)\,dx.
\end{equation}

The main feature of \eqref{eq:classical-SV} is that the first moment measure is a constant multiple of Lebesgue measure. For unimodular lattices, Siegel's mean value theorem gives constant $1$ when summing over all nonzero lattice vectors, and constant $1/\zeta(2)=6/\pi^2$ when summing over primitive vectors \cite{Siegel1945}.

Higher moments ask for averages of products of transforms, or equivalently sums over tuples of vectors. Rogers developed higher-moment formulas on spaces of unimodular lattices \cite{Rogers1955}. Fairchild obtained explicit higher-moment formulas for Hecke triangle groups \cite{Fairchild2021}. Burrin--Fairchild computed a two-orbit Siegel--Veech formula for general lattice orbits and used it to study pairs of saddle connections on Veech surfaces \cite{BurrinFairchild2024}. Our formulas differ in that the averaging measure is the induced measure on the horocycle section rather than Haar measure on the full homogeneous space.

Since the cross-sectional coordinates preserve the sign of the second coordinate when $s>0$, it is convenient to work with the collection of visible holonomy vectors with positive height. For a bounded compactly supported Borel function on the upper half-plane, define
\begin{equation}\label{eq:positive-transform}
 \widehat f_+(p_{s,t}\Gam)
 =\sum_{v\in\Lam\, :\, \pi_2(v)>0}f(p_{s,t}v).
\end{equation}
At each point of the section, the sum is finite because $p_{s,t}\Lam$ is closed and discrete and $f$ has compact support.

\subsection{Moments and factorial moments}\label{subsec:moments}

For $r\in\N$ and a bounded compactly supported Borel function $F$ on $(\R\times(0,\infty))^r$, define
\[
 \widehat F^{(r)}(s,t)
 =\sum_{v_1,\ldots,v_r\in\Lam_+}
 F(p_{s,t}v_1,\ldots,p_{s,t}v_r).
\]
If $F(v_1,\ldots,v_r)=f_1(v_1)\cdots f_r(v_r)$, then
\[
 \widehat F^{(r)}(s,t)
 =\widehat{f_1}_+(s,t)\cdots\widehat{f_r}_+(s,t).
\]
In particular, taking every $f_i=f$ gives $\widehat F^{(r)}=\widehat f_+^{\,r}$. Thus integrating these tuple sums includes the usual moments of the transform.

For factorial moments, we require the vectors to be pairwise distinct. We write
\begin{equation}\label{eq:factorial-transform-def}
 \widehat F^{[r]}(s,t)
 =\sum_{v_1,\ldots,v_r\in\Lam_+}^{\neq}
 F(p_{s,t}v_1,\ldots,p_{s,t}v_r),
\end{equation}
where the superscript $\neq$ indicates this restriction. To see the reason for the terminology, let $A$ be a compact subset of $\R\times(0,\infty)$ and set
\[
 N_A(s,t)=\#\{v\in\Lam_+:p_{s,t}v\in A\}.
\]
For $F=\one_{A^r}$, the ordinary transform gives $N_A^r$, while the factorial transform gives
\[
 (N_A)_r=N_A(N_A-1)\cdots(N_A-r+1),
\]
the falling factorial of $N_A$. Its integral is the corresponding factorial moment. 

\section{The first cross-sectional moment}\label{sec:first}

We begin with a lattice surface whose Veech group has one cusp. Throughout \cref{sec:first,sec:second,sec:higher}, unless stated otherwise, we use the normalization of \cref{subsec:section} and assume $-\Id\in\Gam$.

\subsection{Cusp data}

Let
\begin{equation}\label{eq:J-def}
 J=\pi_2(\Lam)\cap(0,\infty).
\end{equation}
The set $J$ has only finitely many elements in each bounded interval. Indeed, if $0<\zeta\leq Y$, the relation $h_\alpha\Lam=\Lam$ lets us choose a vector at height $\zeta$ whose first coordinate lies in $[0,\alpha\zeta)$. All such representatives lie in the bounded rectangle $[0,\alpha Y]\times[0,Y]$. Since the holonomy set is closed and discrete in $\R^2$, there are only finitely many of them in $[0,\alpha Y]\times [0,Y]$. In particular, $J$ has a least element, which is positive.

For $\zeta\in J$, we denote
$\Lam_\zeta
 :=\Lam\cap(\R\times\{\zeta\}).$
The geometric Euler totient function is given by
\begin{equation}\label{eq:phi-general}
 \ph(\zeta)
 =\#\{x\in\R:(x,\zeta)\in\Lam,\ 0\leq x<\alpha\zeta\}.
\end{equation}
Let
\begin{equation}\label{eq:reps}
 x_{\zeta,1},\ldots,x_{\zeta,\ph(\zeta)}\in[0,\alpha\zeta)
\end{equation}
be the first coordinates of these representatives. Then
\begin{equation}\label{eq:height-decomp}
 \Lam_\zeta
 =\left\{(x_{\zeta,a}+q\alpha\zeta,\zeta):
 1\leq a\leq\ph(\zeta),\ q\in\Z\right\}.
\end{equation}

We define the cumulative geometric Euler totient function as
\begin{equation}\label{eq:Phi-omega}
 \Phi_\omega(y)=\sum_{\substack{\zeta\in J\\ \zeta\leq y}}\ph(\zeta).
\end{equation}

\subsection{The first moment formula}

\begin{theorem}\label{thm:first-general}
Let $\omega$ be a lattice surface with Veech group $\Gam<\mathrm{SL}(2,\R)$. Assume that $\mathrm{SL}(2,\R)/\Gam$ has one cusp and that $-\Id\in\Gam$. Then, for every bounded compactly supported Borel function $f:\R\times(0,\infty)\to\R$,
\begin{equation}\label{eq:first-general}
 \int_{\Om_\alpha}\widehat f_+\,dm_\alpha
 =\frac{2}{\alpha}
 \sum_{\zeta\in J}\ph(\zeta)
 \int_\zeta^\infty\int_\R
 f(x,y)\,\frac{1}{y^2}\,dx\,dy.
\end{equation}
Equivalently,
\begin{equation}\label{eq:first-density}
 \int_{\Om_\alpha}\widehat f_+\,dm_\alpha
 =\frac{2}{\alpha}
 \int_0^\infty\int_\R
 f(x,y)\frac{\Phi_\omega(y)}{y^2}\,dx\,dy.
\end{equation}
\end{theorem}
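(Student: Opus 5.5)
We compute the integral directly in the coordinates $(s,t)$, unfolding the sum over $\Lam_+$ by height and then by horizontal translates. For $v=(x,\zeta)\in\Lam_+$ we have $p_{s,t}v=(sx+t\zeta,\zeta/s)$. Since $f$ is bounded with compact support in $\R\times(0,\infty)$, only finitely many heights $\zeta\in J$ contribute, and for each height only finitely many $q$. Hence all interchanges of sum and integral below are justified by Tonelli applied to $|f|$. Using \eqref{eq:height-decomp}, we write
\[
 \int_{\Om_\alpha}\widehat f_+\,dm_\alpha
 =\frac{2}{\alpha}\sum_{\zeta\in J}\sum_{a=1}^{\ph(\zeta)}\sum_{q\in\Z}
 \int_0^1\int_{1-\alpha s}^{1} f\bigl(s(x_{\zeta,a}+q\alpha\zeta)+t\zeta,\ \zeta/s\bigr)\,dt\,ds .
\]

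The key step is to absorb the sum over $q$ into the $t$-integral. Fix $s$ and $a$, and substitute $u=sx_{\zeta,a}+sq\alpha\zeta+t\zeta$, so that $du=\zeta\,dt$. As $t$ ranges over $(1-\alpha s,1]$, the variable $u$ ranges over an interval of length $\alpha s\zeta$ whose left endpoint shifts by exactly $\alpha s\zeta$ when $q$ increases by one. The intervals for $q\in\Z$ therefore tile $\R$, and
\[
 \sum_{q\in\Z}\int_{1-\alpha s}^{1} f\bigl(\cdots\bigr)\,dt=\frac1\zeta\int_\R f(u,\zeta/s)\,du .
\]
This tiling is exactly where the width $\alpha$ of the triangle $\Om_\alpha$ matches the cusp width, i.e.\ where the section coordinates encode the stabilizer $\Gam_{e_1}=\langle h_\alpha\rangle$. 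The right-hand side is independent of $a$, so the sum over $a$ contributes the factor $\ph(\zeta)$.

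Next, in the $s$-integral we substitute $y=\zeta/s$, so that $ds=\zeta y^{-2}\,dy$. As $s$ ranges over $(0,1]$, $y$ ranges over $[\zeta,\infty)$. The factor $\zeta$ cancels the $1/\zeta$ from the previous step, giving
\[
 \frac{2}{\alpha}\sum_{\zeta\in J}\ph(\zeta)\int_\zeta^\infty\int_\R f(x,y)\,\frac{dx\,dy}{y^2},
\]
which is \eqref{eq:first-general}. Exchanging the sum over $\zeta$ with the $y$-integral turns the constraint $\zeta\le y$ into $\Phi_\omega(y)$, which gives \eqref{eq:first-density}. This exchange is legitimate because $J$ is locally finite, as shown before \eqref{eq:phi-general}, and $f$ vanishes for $y$ outside a compact subset of $(0,\infty)$.

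The main obstacle is not the computation but justifying the first display. We need that $(s,t)\mapsto p_{s,t}\Gam$ is a bijection from $\Om_\alpha$ onto $\mathcal L$, and that $\widehat f_+$ is well defined on the coset $p_{s,t}\Gam$. The second point holds because $\Gam\Lam=\Lam$, so $\Lam$ depends only on the coset, and $s>0$ preserves positivity of heights; hence the sum over $\Lam_+$ is unambiguous. For the first point we take the Uyanik--Work parameterization recalled in \cref{subsec:section} as given. We also check that the decomposition \eqref{eq:height-decomp} uses $h_\alpha\Lam=\Lam$ and that the representatives $x_{\zeta,a}$ are distinct modulo $\alpha\zeta$, so that each vector of $\Lam_\zeta$ is counted exactly once.
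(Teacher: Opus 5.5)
Your proposal is correct and follows essentially the same route as the paper. You decompose $\Lam_+$ by height and parabolic representative, unfold the sum over the $h_\alpha$-shift $q$ against the $t$-interval of length $\alpha s$ to obtain $\frac{1}{\zeta}\int_\R f(u,\zeta/s)\,du$, and then substitute $y=\zeta/s$.

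The only minor difference is how the interchanges are justified. You note that only finitely many heights and shifts contribute, which does hold uniformly on $\Om_\alpha$: if $\supp f\subset\R\times(0,Y]$, a nonzero term forces $s\geq\zeta/Y$. The paper instead treats $f\geq0$ first and then splits $f$ into its positive and negative parts.
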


\begin{proof}
First suppose that $f$ is a nonnegative function. Fix a height $\zeta\in J$ and one representative $x_{\zeta,a}$ from \eqref{eq:reps}.

Set
\[
 v_q=(x_{\zeta,a}+q\alpha\zeta,\zeta),
 \qquad q\in\Z.
\]
Applying $p_{s,t}$ gives
\begin{equation}\label{eq:first-action}
 p_{s,t}v_q
 =\left(sx_{\zeta,a}+t\zeta+q\alpha\zeta s,\frac{\zeta}{s}\right).
\end{equation}
Using the parameterization of $\mathcal L$ and the measure in \eqref{eq:section-measure}, we first integrate the contribution of this family in the $t$-variable. Fix $s$, $\zeta$, and $a$. For each $q\in\Z$, set
\[
 u=sx_{\zeta,a}+t\zeta+q\alpha\zeta s.
\]
Then $du=\zeta\,dt$. As $t$ runs through the interval $(1-\alpha s,1]$, the variable $u$ runs through an interval of length $\alpha\zeta s$. Replacing $q$ by $q+1$ translates this interval by exactly $\alpha\zeta s$. Hence these intervals tile the real line. Therefore
\begin{equation}\label{eq:first-unfold-t}
 \int_{1-\alpha s}^{1}
 \sum_{q\in\Z}
 f\left(sx_{\zeta,a}+t\zeta+q\alpha\zeta s,\frac{\zeta}{s}\right)dt
 =\frac{1}{\zeta}\int_\R f\left(u,\frac{\zeta}{s}\right)du.
\end{equation}
The right-hand side is independent of the representative $a$. Summing over all $\ph(\zeta)$ possible values for $a$ and over all heights gives
\begin{equation}\label{eq:first-before-y}
 \int_{\Om_\alpha}\widehat f_+\,dm_\alpha
 =\frac{2}{\alpha}
 \sum_{\zeta\in J}\frac{\ph(\zeta)}{\zeta}
 \int_0^1\int_\R f\left(x,\frac{\zeta}{s}\right)dx\,ds.
\end{equation}
Now set $y=\zeta/s$. Then $s=\zeta/y$ and $ds=-\zeta y^{-2}dy$. As $s$ increases from $0$ to $1$, the variable $y$ decreases from $+\infty$ to $\zeta$. Hence
\[
 \frac1\zeta\int_0^1\int_\R f\left(x,\frac\zeta s\right)dx\,ds
 =\int_\zeta^\infty\int_\R f(x,y)\,\frac{1}{y^2}\,dx\,dy.
\]
Substitution into \eqref{eq:first-before-y} proves \eqref{eq:first-general}. 
If $f$ changes sign, choose $M,Y>0$ such that $\supp(f)\subset[-M,M]\times(0,Y]$ and apply the nonnegative formula to $|f|$. Only the finitely many heights $\zeta\leq Y$ contribute, and each integral on the right of \eqref{eq:first-general} is bounded by $2M\|f\|_\infty/\zeta$. Thus the transform of $|f|$ is integrable. We may therefore apply the formula separately to the positive and negative parts of $f$. This completes the proof.
\end{proof}

\begin{remark}\label{rem:minus-id}
The assumption $-\Id\in\Gam$ gives the one-triangle parameterization used above, but the unfolding argument also applies when $-\Id\notin\Gam$. Suppose $-\Id\notin\Gam.$ Uyanik--Work distinguish two cases for the geometry of the cross section \cite{UyanikWork2016}.

If a primitive parabolic generator has eigenvalue $1$, then after conjugation it has the form
\[
 \begin{pmatrix}1&\alpha\\0&1\end{pmatrix}.
\]
The two horizontal orientations are no longer identified in $\G/\Gam$, and the section consists of two triangles
\[
 \Om^+=\{(a,b):0<a\leq1,\ 1-\alpha a<b\leq1\}
\]
and
\[
 \Om^-=\{(a,b):-1\leq a<0,\ 1+\alpha a<b\leq1\}.
\]
Their union has Euclidean area $\alpha$, so the induced probability measure has density $1/\alpha$ with respect to $da\,db$. On this two-component section, define the positive transform by summing over vectors whose \emph{transformed} height is positive:
\[
 \widehat f_+(a,b)=\sum_{\substack{v\in\Lam\\\pi_2(p_{a,b}v)>0}}f(p_{a,b}v).
\]
On $\Om^+$ this agrees with the earlier definition. On $\Om^-$ write $a=-s$ and use $-\Lam=\Lam$ to write a contributing vector as $(-u,-\zeta)$, with $(u,\zeta)\in\Lam_+$. Then
\[
 p_{-s,b}(-u,-\zeta)=(su-b\zeta,\zeta/s).
\]
The variable $-b$ runs through an interval of length $\alpha s$, so the same parabolic unfolding applies. The two components have equal integrals, each with coefficient $1/\alpha$, and their sum gives the factor $2/\alpha$ in \eqref{eq:first-general}.

If a primitive parabolic generator has eigenvalue $-1$, replace it by its inverse if necessary and write it as $P=-h_\beta$ with $\beta>0$. Then $P^2=h_{2\beta}$, so the cusp width as defined in this paper is $\alpha=2\beta$. The section is a single triangle
\[
 \Om=\{(a,b):0<a\leq1,\ 1-2\beta a<b\leq1\}
      =\Om_\alpha.
\]
Its probability density is $1/\beta=2/\alpha$. Using representatives modulo $\alpha\zeta$ therefore gives exactly \eqref{eq:first-general}. This distinction between the parameter $\beta$ in $P$ and the positive unipotent width $\alpha$ is needed when comparing the parabolic normal forms in \cite{UyanikWork2016}.

\end{remark}

\begin{remark}\label{rem:several-cusps}
Suppose that $\G/\Gam$ has finitely many cusps. Uyanik--Work decompose the Poincar\'e section as a finite disjoint union
\[
 \Om=\bigsqcup_{i=1}^r\Om_i,
\]
where each $\Om_i$ is associated to one conjugacy class of maximal parabolic subgroups. For the $i$th cusp, choose $C_i\in\G$ sending a shortest vector in the corresponding parabolic direction to $e_1$. The corresponding surfaces are represented by $p_{s,t}C_i\omega$. In these coordinates there is a cusp width $\alpha_i>0$ and a closed, discrete set $C_i\Lam$ with height set $J_i$ and horizontal multiplicity function $\ph_i$. On this component, the positive transform is computed using the vectors of $C_i\Lam$ whose images under $p_{s,t}$ have positive height. The proof of \cref{thm:first-general}, with the orientation convention in \cref{rem:minus-id} when needed, applies to each component separately.

Let $m$ be the invariant probability measure on the full section and put $w_i=m(\Om_i)$. If $m_i=w_i^{-1}m|_{\Om_i}$ denotes the normalized probability measure on the $i$th component, then
\[
 \int_{\Om}\widehat f_+\,dm
 =\sum_{i=1}^r w_i\int_{\Om_i}\widehat f_+\,dm_i.
\]
After conjugating the $i$th cusp to the horizontal direction, its normalized contribution has the same form as \cref{thm:first-general}, with $\alpha$, $J$, and $\ph$ replaced by the cusp data of each cusp.
\end{remark}

From now on, we assume that $\mathrm{SL}(2,\R)/\Gamma$ has one cusp without stating it explicitly.

\subsection{The modular case}

If $\omega$ is the unit area square torus, then $\Gam=\mathrm{SL}(2,\Z)$ and $\Lam=\Z^2_{\prim}$. The positive vectors are
\[
 \Z^2_{\prim,+}
 =\{(A,j)\in\Z^2:j\geq1,\ \gcd(A,j)=1\}.
\]

\begin{corollary}\label{cor:modular-first}
Let $\omega$ be the unit area square torus. Then, for every bounded compactly supported Borel function $f:\R\times(0,\infty)\to\R$,
\begin{equation}\label{eq:modular-first}
 \int_\Om\widehat f_+\,dm
 =2\sum_{j\geq1}\phi(j)
 \int_j^\infty\int_\R f(x,y)\,\frac{1}{y^2}\,dx\,dy.
\end{equation}
Equivalently,
\begin{equation}\label{eq:modular-density}
 \int_\Om\widehat f_+\,dm
 =2\int_0^\infty\int_\R
 f(x,y)\frac{1}{y^2}
 \left(\sum_{1\leq j\leq y}\phi(j)\right)dx\,dy.
\end{equation}
\end{corollary}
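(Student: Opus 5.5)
The plan is to obtain \cref{cor:modular-first} as a direct specialization of \cref{thm:first-general}. This requires checking that the square torus satisfies the hypotheses of that theorem, and then computing its cusp data $\alpha$, $J$ and $\ph$.

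\textbf{Hypotheses.} The Veech group of the unit area square torus with one marked point is $\Gam=\mathrm{SL}(2,\Z)$. It contains $-\Id$, and $\mathrm{SL}(2,\R)/\mathrm{SL}(2,\Z)$ has exactly one cusp. The visible holonomy set is $\Lam=\Z^2_{\prim}$, and $e_1=(1,0)^T$ already lies in $\Lam$, so no preliminary normalization is needed.

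\textbf{Cusp width.} The stabilizer of $e_1$ in $\mathrm{SL}(2,\Z)$ consists of the matrices $h_a$ with $a\in\Z$. Hence $\alpha=1$, and $\Om_\alpha$ with $m_\alpha$ is exactly the BCZ triangle $\Om$ with $dm=2\,ds\,dt$ from \eqref{eq:bcz-triangle}.

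\textbf{Heights and totient.} The positive heights of primitive vectors are the positive integers. Indeed, $(1,j)$ is primitive for every $j\geq1$, and any height of an integer vector is an integer, so $J=\N$. For $j\in J$, $\ph(j)$ counts the integers $x$ with $0\leq x<j$ and $\gcd(x,j)=1$. This is exactly $\phi(j)$. For $j=1$ the only such value is $x=0$, and $(0,1)$ is primitive, so $\ph(1)=1=\phi(1)$.

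Substituting $\alpha=1$, $J=\N$ and $\ph=\phi$ into \eqref{eq:first-general} gives \eqref{eq:modular-first}. Substituting them into \eqref{eq:first-density}, with $\Phi_\omega(y)=\sum_{1\leq j\leq y}\phi(j)$, gives \eqref{eq:modular-density}.

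There is no real obstacle here. The only points needing care are the identification $\alpha=1$, which holds because the stabilizer is generated by $h_1$ and not some smaller translation, and the edge case $j=1$ in the totient count. Both are settled by the elementary checks above.
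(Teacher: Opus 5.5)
Your proposal is correct and matches the paper's proof: both specialize \cref{thm:first-general} using $\Gam=\mathrm{SL}(2,\Z)$, $\alpha=1$, $J=\N$ and $\ph=\phi$. Your extra checks (the stabilizer of $e_1$ being $\{h_a:a\in\Z\}$, $\Om_1$ coinciding with the BCZ triangle, and the case $j=1$) simply fill in details the paper leaves implicit.
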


\begin{proof}
For the unit area square torus, $\Gam=\mathrm{SL}(2,\Z)$, the cusp width is $\alpha=1$, $J=\N$, and the geometric Euler totient function is the classical Euler totient function. Substituting this into \cref{thm:first-general}, we obtain the desired result.
\end{proof}

\section{The second moment and the second factorial moment}\label{sec:second}

We now compute the second moment. 

\subsection{The second moment for lattice  surfaces}

For a bounded compactly supported Borel function $F$ on $(\R\times(0,\infty))^2$, recall that
\[
 \widehat F^{(2)}(s,t)
 =\sum_{v,w\in\Lam_+}F(p_{s,t}v,p_{s,t}w).
\]
If $F(v,w)=f_1(v)f_2(w)$, then
\[
 \widehat F^{(2)}(s,t)
 =\widehat{f_1}_+(s,t)\widehat{f_2}_+(s,t).
\]
In particular, taking $f_1=f_2=f$ gives $\widehat F^{(2)}=\widehat f_+^{\,2}$. We use a function of two vectors so that the formula also applies when the two test functions differ.

Fix heights $\zeta,\eta\in J$ and representatives indexed by
\[
 1\leq a\leq\ph(\zeta),
 \qquad 1\leq b\leq\ph(\eta).
\]
For $m\in\Z$, define
\begin{equation}\label{eq:D-general-pair}
 D_{\zeta,\eta}(a,b;m)
 =x_{\zeta,a}\eta-x_{\eta,b}\zeta+\alpha\zeta\eta m.
\end{equation}
The first two terms give the determinant of the chosen representatives. The last term records the change in determinant when one vector is shifted relative to the other.

Once the first transformed vector is $(x,y)$ and the determinant is $D$, the second transformed vector is determined by the ratio of the heights. We write
\begin{equation}\label{eq:V-pair-general}
 \begin{aligned}
 V_1(x,y)&=(x,y),\\
 V_2^{\zeta,\eta,D}(x,y)
 &=\left(\frac{\eta}{\zeta}x-\frac{D}{y},
          \frac{\eta}{\zeta}y\right).
 \end{aligned}
\end{equation}
Notice that $\detm(V_1,V_2^{\zeta,\eta,D})=D$.

\begin{theorem}\label{thm:second-general}
Let $\omega$ be a lattice surface whose Veech group $\Gam$ has one cusp and contains $-\Id$. Use the cusp data and coordinates of \cref{sec:first}. Then, for every bounded compactly supported Borel function $F:(\R\times(0,\infty))^2\to\R$,
\begin{align}\label{eq:second-general}
 \int_{\Om_\alpha}\widehat F^{(2)}\,dm_\alpha
 &=\frac{2}{\alpha}
 \sum_{\zeta,\eta\in J}
 \sum_{a=1}^{\ph(\zeta)}
 \sum_{b=1}^{\ph(\eta)}
 \sum_{m\in\Z}\notag\\
 &\qquad\times\int_\zeta^\infty\int_\R
 F\left(V_1(x,y),V_2^{\zeta,\eta,D}(x,y)\right)
 \frac{1}{y^2}\,dx\,dy,
\end{align}
where $D=D_{\zeta,\eta}(a,b;m)$.
\end{theorem}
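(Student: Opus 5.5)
We follow the unfolding argument from the proof of \cref{thm:first-general}, now with the diagonal $\langle h_\alpha\rangle$-action on pairs of vectors. First take $F\ge0$, so all rearrangements are justified by Tonelli. By \eqref{eq:height-decomp}, every pair $(v,w)\in\Lam_+^2$ can be written uniquely as
\[
v=(x_{\zeta,a}+q\alpha\zeta,\zeta),\qquad w=(x_{\eta,b}+r\alpha\eta,\eta),
\]
with $q,r\in\Z$. We then reparametrize by $(q,m)$ with $m=r-q$; this change of indices is a bijection of $\Z^2$.

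The determinant of the pair is
\[
\detm(v,w)=x_{\zeta,a}\eta-x_{\eta,b}\zeta+\alpha\zeta\eta(q-r).
\]
So, after replacing $m$ by $-m$ (which is harmless, since we sum over all $m\in\Z$), $\detm(v,w)=D_{\zeta,\eta}(a,b;m)$ depends only on $m$ and not on $q$. Since $p_{s,t}\in\G$ preserves determinants, the pair $(p_{s,t}v,p_{s,t}w)$ has the following description:
\begin{itemize}[leftmargin=*]
\item the first vector is $(x,y)$ with $y=\zeta/s$;
\item the second vector has height $\eta/s=\tfrac{\eta}{\zeta}y$;
\item its first coordinate $x'$ is forced by $x\cdot\tfrac{\eta}{\zeta}y-x'y=D$.
\end{itemize}
Solving gives $x'=\tfrac{\eta}{\zeta}x-D/y$. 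Hence
\[
(p_{s,t}v,p_{s,t}w)=\bigl(V_1(x,y),V_2^{\zeta,\eta,D}(x,y)\bigr),
\]
where $x=sx_{\zeta,a}+t\zeta+q\alpha\zeta s$ and $D=D_{\zeta,\eta}(a,b;m)$. I will double-check the sign convention of $D$ in \eqref{eq:D-general-pair} against this computation. If it comes out reversed, the substitution $m\mapsto -m$ above absorbs the discrepancy, since the constant part of $D$ is fixed by $a,b$.

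Next fix $\zeta,\eta,a,b,m$ and $s$, and integrate over $t\in(1-\alpha s,1]$ while summing over $q\in\Z$. This is exactly the situation of \eqref{eq:first-unfold-t}. The function $u\mapsto F(V_1(u,\zeta/s),V_2^{\zeta,\eta,D}(u,\zeta/s))$ plays the role of $f(\cdot,\zeta/s)$, and the intervals swept out by $u=sx_{\zeta,a}+t\zeta+q\alpha\zeta s$ tile $\R$. The $t$-integral of the $q$-sum therefore equals
\[
\zeta^{-1}\int_\R F\bigl(V_1(u,\zeta/s),V_2^{\zeta,\eta,D}(u,\zeta/s)\bigr)\,du.
\]
The change of variables $y=\zeta/s$, exactly as in the first-moment proof, converts $\tfrac{2}{\alpha}\zeta^{-1}\int_0^1\cdots ds$ into $\tfrac{2}{\alpha}\int_\zeta^\infty\cdots y^{-2}\,dy$. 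Summing over $\zeta,\eta,a,b,m$ then yields \eqref{eq:second-general} for $F\ge0$.

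The main obstacle is extending to signed $F$, which requires $\widehat{|F|}^{(2)}\in L^1(m_\alpha)$. I would take $\supp F\subset([-M,M]\times(0,Y])^2$. Only finitely many heights $\zeta,\eta\le Y$ contribute, each with finitely many representatives $a,b$. For fixed $(x,y)$ with $|x|\le M$ and $y\ge\zeta$, the condition $|V_2^{\zeta,\eta,D}(x,y)_1|\le M$ forces
\[
|D|\le y\,M\bigl(1+\eta/\zeta\bigr)\le YM\bigl(1+Y/\min J\bigr).
\]
Since $D$ runs through an arithmetic progression of step $\alpha\zeta\eta$, only finitely many $m$ contribute. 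Each remaining term is bounded by $\|F\|_\infty\cdot 2M/\zeta$. Thus the right-hand side of \eqref{eq:second-general} is finite for $|F|$, and splitting $F$ into its positive and negative parts completes the proof.
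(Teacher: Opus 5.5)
Your proposal is correct and follows essentially the same route as the paper: a unique decomposition of pairs by heights, representatives, a common shift $q$ and a relative shift $m$; invariance of the determinant under $p_{s,t}$, which forces the second vector to be $V_2^{\zeta,\eta,D}$; tiling in $t$, the substitution $y=\zeta/s$, and a finiteness bound on $|D|$ to handle signed $F$. Your sign hedge is unnecessary, since with $m=q-r$ your determinant is exactly $D_{\zeta,\eta}(a,b;m)$. Your bound on $|D|$ is cruder than the paper's $|D|\le 2MY$, which uses that both transformed heights are at most $Y$, but either bound suffices.
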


\begin{proof}
First suppose that $F$ is nonnegative. We will decompose the integral into contributions from pairs with fixed heights, representatives, and relative parabolic shift, and then compute each contribution. All exchanges of sums and integrals below are justified by Tonelli's theorem.

By \eqref{eq:height-decomp}, every ordered pair in $\Lam_+^2$ has a unique representation
\[
 v=(x_{\zeta,a}+q_1\alpha\zeta,\zeta),
 \qquad
 w=(x_{\eta,b}+q_2\alpha\eta,\eta),
\]
where $\zeta,\eta\in J$, $1\leq a\leq\ph(\zeta)$, $1\leq b\leq\ph(\eta)$, and $q_1,q_2\in\Z$. Set $m=q_1-q_2$ and $q=q_1$. The change of indices $(q_1,q_2)\leftrightarrow(q,m)$ is a bijection of $\Z^2$. For fixed $\zeta,\eta,a,b,m$, the resulting family of pairs is
\[
 v_q=(x_{\zeta,a}+q\alpha\zeta,\zeta),
 \qquad
 w_{q-m}=(x_{\eta,b}+(q-m)\alpha\eta,\eta),
 \qquad q\in\Z.
\]
Denote the contribution of this family by
\begin{equation}\label{eq:second-family-contribution}
 C_{\zeta,\eta}(a,b;m)
 =\frac{2}{\alpha}\int_0^1\int_{1-\alpha s}^{1}
 \sum_{q\in\Z}F(p_{s,t}v_q,p_{s,t}w_{q-m})\,dt\,ds.
\end{equation}
Using the section measure $dm_\alpha=\frac{2}{\alpha}\,ds\,dt$, we can therefore write the full integral as
\begin{equation}\label{eq:second-decomposition}
 \int_{\Om_\alpha}\widehat F^{(2)}\,dm_\alpha
 =\sum_{\zeta,\eta\in J}
 \sum_{a=1}^{\ph(\zeta)}\sum_{b=1}^{\ph(\eta)}
 \sum_{m\in\Z}C_{\zeta,\eta}(a,b;m).
\end{equation}

We now compute one such contribution. Fix $\zeta,\eta,a,b,m$ and write $D=D_{\zeta,\eta}(a,b;m)$. Every pair in this family has the same determinant, since
\begin{equation}\label{eq:det-pair-proof}
 \begin{aligned}
 \detm(v_q,w_{q-m})
 &=(x_{\zeta,a}+q\alpha\zeta)\eta
   -(x_{\eta,b}+(q-m)\alpha\eta)\zeta\\
 &=x_{\zeta,a}\eta-x_{\eta,b}\zeta+\alpha\zeta\eta m
 =D.
 \end{aligned}
\end{equation}
Write the first transformed vector as $p_{s,t}v_q=(x,y)$, where
\begin{equation}\label{eq:xy-pair}
 x=s(x_{\zeta,a}+q\alpha\zeta)+t\zeta,
 \qquad y=\frac{\zeta}{s}.
\end{equation}
The first coordinate of the second transformed vector satisfies
\begin{align*}
 s(x_{\eta,b}+(q-m)\alpha\eta)+t\eta
 &=\frac{\eta}{\zeta}
   \bigl(s(x_{\zeta,a}+q\alpha\zeta)+t\zeta\bigr)
   -\frac{sD}{\zeta}\\
 &=\frac{\eta}{\zeta}x-\frac{D}{y}.
\end{align*}
Its second coordinate is $\eta/s=(\eta/\zeta)y$. Thus
\[
 p_{s,t}v_q=V_1(x,y),
 \qquad
 p_{s,t}w_{q-m}=V_2^{\zeta,\eta,D}(x,y).
\]

We first integrate in $t$, keeping $s$ fixed. For each $q\in\Z$, the substitution in \eqref{eq:xy-pair} gives $dx=\zeta\,dt$ and sends $(1-\alpha s,1]$ to
\[
 I_q(s)
 =\bigl(sx_{\zeta,a}+\zeta+(q-1)\alpha\zeta s,
        sx_{\zeta,a}+\zeta+q\alpha\zeta s\bigr].
\]
These intervals have length $\alpha\zeta s$ and tile $\R$ as $q$ varies. Since $D$ is independent of $q$, we obtain
\begin{equation}\label{eq:second-unfold-t}
 \begin{aligned}
 &\int_{1-\alpha s}^{1}
   \sum_{q\in\Z}F(p_{s,t}v_q,p_{s,t}w_{q-m})\,dt\\
 &\qquad=\frac{1}{\zeta}\sum_{q\in\Z}\int_{I_q(s)}
   F\left(V_1\left(x,\frac{\zeta}{s}\right),
          V_2^{\zeta,\eta,D}\left(x,\frac{\zeta}{s}\right)\right)dx\\
 &\qquad=\frac{1}{\zeta}\int_\R
   F\left(V_1\left(x,\frac{\zeta}{s}\right),
          V_2^{\zeta,\eta,D}\left(x,\frac{\zeta}{s}\right)\right)dx.
 \end{aligned}
\end{equation}
Substituting this into \eqref{eq:second-family-contribution} gives
\begin{equation}\label{eq:second-family-before-y}
 C_{\zeta,\eta}(a,b;m)
 =\frac{2}{\alpha\zeta}\int_0^1\int_\R
 F\left(V_1\left(x,\frac{\zeta}{s}\right),
        V_2^{\zeta,\eta,D}\left(x,\frac{\zeta}{s}\right)\right)dx\,ds.
\end{equation}

It remains to integrate in $s$. As in the proof of \cref{thm:first-general}, set $y=\zeta/s$. Then $ds=-\zeta y^{-2}\,dy$, and $y$ decreases from $+\infty$ to $\zeta$ as $s$ increases from $0$ to $1$. Consequently, each family contributes exactly
\begin{equation}\label{eq:second-family-evaluated}
 C_{\zeta,\eta}(a,b;m)
 =\frac{2}{\alpha}\int_\zeta^\infty\int_\R
 F\left(V_1(x,y),V_2^{\zeta,\eta,D}(x,y)\right)
 \frac{1}{y^2}\,dx\,dy.
\end{equation}
Summing \eqref{eq:second-family-evaluated} over the parameters in \eqref{eq:second-decomposition} proves \eqref{eq:second-general} for nonnegative $F$.

Finally, we check finiteness so that the formula also applies to signed functions. Choose $M,Y>0$ so that every vector in either projection of $\supp(F)$ to $\R^2$ lies in $[-M,M]\times(0,Y]$. A contributing pair has $\zeta,\eta\leq Y$, so only finitely many heights and representatives occur. Moreover,
\[
 |D|=|\detm(V_1,V_2^{\zeta,\eta,D})|\leq2MY.
\]
For each fixed choice of heights and representatives, \eqref{eq:D-general-pair} therefore allows only finitely many integers $m$. Each remaining integral satisfies
\[
 \int_\zeta^\infty\int_\R
 \left|F\left(V_1(x,y),V_2^{\zeta,\eta,D}(x,y)\right)\right|
 \frac{1}{y^2}\,dx\,dy
 \leq\frac{2M\|F\|_\infty}{\zeta}<\infty.
\]
Applying the nonnegative formula to $|F|$ proves absolute integrability. We may therefore apply the formula separately to the positive and negative parts of $F$, completing the proof.
\end{proof}

To compute the second factorial moment, we must remove pairs in which the same vector occurs twice. For visible vectors of positive height, these are exactly the pairs with determinant zero.

\begin{lemma}\label{lem:pair-diagonal-general}
Let $v,w\in\Lam_+$ be visible holonomy vectors with positive heights. Then
\[
 v=w\quad \text{ if and only if }\quad \detm(v,w)=0.
\]
\end{lemma}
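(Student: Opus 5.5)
The forward direction is immediate: if $v=w$ then $\detm(v,w)=\detm(v,v)=0$. All the work is in the converse, which we reduce to the defining property of visibility.

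Assume $\detm(v,w)=0$ with $v,w\in\Lam_+$. Then $v$ and $w$ are linearly dependent. Both are nonzero, so $w=\lambda v$ for some real $\lambda\neq0$. Comparing second coordinates gives $\pi_2(w)=\lambda\pi_2(v)$. Since both heights are positive, $\lambda>0$, so $v$ and $w$ lie on the same open ray from the origin.

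Next we use visibility. By definition, a vector $u\in\Lam$ satisfies $tu\notin H_\omega$ for every $0<t<1$. Suppose $\lambda<1$. Then $w=\lambda v\in\Lam\subset H_\omega$ with $0<\lambda<1$, which contradicts the visibility of $v$. Suppose $\lambda>1$. Then $v=\lambda^{-1}w\in H_\omega$ with $0<\lambda^{-1}<1$, which contradicts the visibility of $w$. Hence $\lambda=1$ and $v=w$.

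The only point needing care is the sign of $\lambda$. Without the positive-height hypothesis, $w=-v$ would also give determinant zero, since $\Lam$ is centrally symmetric. Restricting to $\Lam_+$ excludes exactly this case. No earlier result is needed beyond the definition of $\Lam_\omega$ and the fact that $0\notin\Lam$.
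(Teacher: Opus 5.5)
Your proof is correct and follows the paper's argument: vanishing determinant plus positive heights puts $w=cv$ with $c>0$, and visibility of whichever vector is longer forces $c=1$. Your explicit split into the cases $\lambda<1$ and $\lambda>1$, and the remark on why positive heights rule out $w=-v$, just spell out what the paper compresses into one sentence.
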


\begin{proof}
If $v=w$, then $\detm(v,w)=0$. Conversely, if the determinant is zero, then $v$ and $w$ lie on the same ray because both have positive second coordinate. Thus $w=cv$ for some $c>0$. If $c\neq1$, one of the two vectors is a strict positive contraction of the other, contradicting visibility. Hence $c=1$ and $v=w$.
\end{proof}

Define the second factorial transform
\[
 \widehat F^{[2]}(s,t)
 =\sum_{\substack{v,w\in\Lam_+\\v\neq w}}F(p_{s,t}v,p_{s,t}w).
\]

\begin{corollary}\label{cor:second-factorial-general}
Under the hypotheses of \cref{thm:second-general},
\begin{align}\label{eq:second-factorial-general}
 \int_{\Om_\alpha}\widehat F^{[2]}\,dm_\alpha
 &=\frac{2}{\alpha}
 \sum_{\zeta,\eta\in J}
 \sum_{a=1}^{\ph(\zeta)}
 \sum_{b=1}^{\ph(\eta)}
 \sum_{\substack{m\in\Z\\D_{\zeta,\eta}(a,b;m)\neq0}}\notag\\
 &\qquad\times\int_\zeta^\infty\int_\R
 F\left(V_1(x,y),V_2^{\zeta,\eta,D}(x,y)\right)
 \frac{1}{y^2}\,dx\,dy.
\end{align}
\end{corollary}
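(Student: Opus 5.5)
The plan is to run the proof of \cref{thm:second-general} again with $F$ replaced by $F\cdot\one_{\{v\neq w\}}$, and to use \cref{lem:pair-diagonal-general} to read this indicator off the family parameters. The difficulty is that $\one_{\{v\neq w\}}$ is not a function of the transformed pair alone. However, $p_{s,t}\in\G$ is injective and preserves determinants, so for $v,w\in\Lam_+$ we have $v\neq w$ if and only if $\detm(p_{s,t}v,p_{s,t}w)\neq0$, by \cref{lem:pair-diagonal-general}.

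Set $\widetilde F(z_1,z_2)=F(z_1,z_2)\,\one_{\{\detm(z_1,z_2)\neq0\}}$. This is again bounded and Borel, and its support is contained in $\supp(F)$, so \cref{thm:second-general} applies to it. By the observation above, $\widehat{\widetilde F}^{(2)}=\widehat F^{[2]}$ pointwise on $\Om_\alpha$.

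Next, substitute $\widetilde F$ into \eqref{eq:second-general}. For fixed $\zeta,\eta,a,b,m$ the integrand is evaluated at $(V_1(x,y),V_2^{\zeta,\eta,D}(x,y))$, and by construction $\detm(V_1,V_2^{\zeta,\eta,D})=D=D_{\zeta,\eta}(a,b;m)$, independently of $(x,y)$. The indicator is therefore the constant $\one_{\{D\neq0\}}$ on each family. Families with $D=0$ contribute zero, and the remaining ones contribute exactly the integrals appearing in \eqref{eq:second-factorial-general}.

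There is no real obstacle here. The only points to check are that $\widetilde F$ meets the hypotheses of \cref{thm:second-general}, and that absolute integrability persists, which holds because $|\widetilde F|\le|F|$.
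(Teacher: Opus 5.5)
Your proposal is correct and is essentially the paper's argument: the paper's proof simply invokes \cref{lem:pair-diagonal-general} to say that the diagonal pairs $v=w$ are exactly the families with $D=0$ in the decomposition from the proof of \cref{thm:second-general}. Your device of applying \cref{thm:second-general} as a black box to $F\cdot\one_{\{\detm(z_1,z_2)\neq0\}}$ packages the same step more carefully without reopening that proof. It relies on $\detm(p_{s,t}v,p_{s,t}w)=\detm(v,w)$ and on $\detm(V_1,V_2^{\zeta,\eta,D})=D$.
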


\begin{proof}
By Lemma~\ref{lem:pair-diagonal-general}, removing the diagonal $v=w$ is exactly the same as removing the terms with determinant $D=0$ from \cref{thm:second-general}.
\end{proof}

\subsection{Second moment for the modular case}\label{subsec:second-modular}

For the square torus, the heights and determinants are integers. We can therefore collect the terms in the preceding formula according to their determinant. The coefficient below counts how many choices of invertible residue classes give the same determinant. For $j,k\in\N$ and $n\in\Z$, define
\begin{equation}\label{eq:Phi-jkn}
 \Phi_{j,k}(n)
 =\#\left\{(a,b)\in(\Z/j\Z)^\times\times(\Z/k\Z)^\times:
 ak-bj\equiv n\pmod{jk}\right\}.
\end{equation}
For $j=1$ or $k=1$, the corresponding unit group is understood to contain its unique residue class.

\begin{theorem}\label{thm:modular-second}
Let $\omega$ be the unit area square torus. Then for every bounded compactly supported Borel function $F:(\R\times(0,\infty))^2\to\R$,
\begin{align}\label{eq:modular-second}
 &\int_\Om \widehat{F}^{(2)}(s,t)\,dm(s,t)\notag\\
 &\qquad=
 2\sum_{j,k\geq1}\sum_{n\in\Z}
 \Phi_{j,k}(n)
 \int_j^\infty\int_\R
 F\left(
 (x,y),
 \left(\frac{k}{j}x-\frac{n}{y},\frac{k}{j}y\right)
 \right)\frac{1}{y^2}\,dx\,dy.
\end{align}

\end{theorem}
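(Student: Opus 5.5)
The plan is to specialize \cref{thm:second-general} to $\Gam=\mathrm{SL}(2,\Z)$, $\Lam=\Z^2_{\prim}$, and then regroup its terms by determinant. Here $\alpha=1$, $J=\N$, and for a height $j$ the representatives $x_{j,a}\in[0,j)$ are exactly the integers $a\in\{0,\dots,j-1\}$ with $\gcd(a,j)=1$. These are in bijection with $(\Z/j\Z)^\times$, with the convention that for $j=1$ the single representative is $a=0$. For heights $\zeta=j$ and $\eta=k$, \eqref{eq:D-general-pair} becomes
\[
 D_{j,k}(a,b;m)=ak-bj+jkm ,
\]
which is an integer. The map $V_2^{j,k,D}$ from \eqref{eq:V-pair-general} is exactly the second argument in \eqref{eq:modular-second} with $n=D$. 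Consequently each term of \eqref{eq:second-general} depends on $(a,b,m)$ only through the integer $n=D_{j,k}(a,b;m)$.

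The key step is the counting identity: for fixed $j,k$ and $n\in\Z$,
\[
 \#\{(a,b,m): D_{j,k}(a,b;m)=n\}=\Phi_{j,k}(n),
\]
where $a$ ranges over the representatives at height $j$, $b$ over those at height $k$, and $m\in\Z$. To prove it, I would show that the map $(a,b,m)\mapsto n$ is injective once $(a,b)$ is fixed, since $m$ is determined by $n$. I would then show that a given pair of residues $(a,b)$ yields $n$ for some $m$ if and only if $ak-bj\equiv n\pmod{jk}$. In that case $m=(n-ak+bj)/(jk)$ is the unique such integer. Because the representatives biject with $(\Z/j\Z)^\times\times(\Z/k\Z)^\times$, and the congruence $ak-bj\bmod jk$ depends only on $a\bmod j$ and $b\bmod k$, the count equals $\Phi_{j,k}(n)$. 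The only point needing care is this well-definedness: replacing $a$ by $a+j$ changes $ak$ by $jk$, and similarly for $b$. The degenerate cases $j=1$ or $k=1$ should be checked against the stated convention.

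With the counting identity in hand, I would reorganize the sum in \eqref{eq:second-general}: for each $(j,k)$, replace $\sum_{a,b}\sum_{m}$ by $\sum_{n\in\Z}\Phi_{j,k}(n)$. This gives \eqref{eq:modular-second} with the prefactor $2/\alpha=2$.

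The regrouping is legitimate for nonnegative $F$ by Tonelli. For signed $F$, I would use the finiteness argument at the end of the proof of \cref{thm:second-general}: only finitely many $j,k$ contribute, and only finitely many $n$ satisfy $|n|\le 2MY$, so all sums are finite and absolutely convergent.

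I expect no real obstacle. The main thing to get right is the bijection between integer representatives in $[0,j)$ and unit residues, together with the sign convention $ak-bj$ rather than $bj-ak$, which must match the determinant orientation in \eqref{eq:det-pair-proof}.
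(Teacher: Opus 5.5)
Your proposal is correct and is essentially the paper's argument. The paper re-runs the parabolic unfolding directly in modular coordinates, indexing pairs by $(j,k,n,a,b)$ together with one common shift. You get the same decomposition by specializing \cref{thm:second-general} (with $\alpha=1$, $J=\N$, $x_{j,a}=a$) and trading the relative shift $m$ for the determinant $n=ak-bj+jkm$; this is the same bijection the paper uses when it solves $r-q=(n-(ak-bj))/(jk)$, and your version saves repeating the unfolding and the finiteness check.
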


\begin{proof}
Suppose that $F$ is nonnegative. Every vector in $\Z^2_{\prim,+}$ has the form $(A,j)$, where $j\geq1$ and $\gcd(A,j)=1$. Expanding the transform and using $dm=2\,ds\,dt$, we obtain
\begin{equation}\label{eq:modular-expand-pairs}
 \begin{aligned}
 \int_\Om\widehat F^{(2)}\,dm
 &=2\sum_{j,k\geq1}
 \sum_{\substack{A,B\in\Z\\\gcd(A,j)=\gcd(B,k)=1}}
 \int_0^1\int_{1-s}^{1}\\
 &\qquad\qquad
 F\left(\left(sA+tj,\frac{j}{s}\right),
         \left(sB+tk,\frac{k}{s}\right)\right)dt\,ds.
 \end{aligned}
\end{equation}
Here $j$ and $k$ are the heights of the two integer vectors, and $A$ and $B$ are their first coordinates. Tonelli's theorem justifies the exchanges of sums and integrals in this proof.

Fix $j,k\geq1$. Write
\[
 A=a+rj,
 \qquad B=b+qk,
\]
where $a,b$ are the unique integer remainders satisfying
\[
 0\leq a<j,\qquad 0\leq b<k,
 \qquad\gcd(a,j)=\gcd(b,k)=1,
\]
and $r,q\in\Z$. When $j=1$, we take $a=0$, and when $k=1$, we take $b=0$. The determinant of the two vectors is
\begin{equation}\label{eq:modular-determinant-remainders}
 n=\detm\bigl((A,j),(B,k)\bigr)
  =Ak-Bj=ak-bj+jk(r-q).
\end{equation}
Thus the remainders of a pair with determinant $n$ satisfy
\[
 ak-bj\equiv n\pmod{jk}.
\]
Conversely, fix $n\in\Z$ and remainders $a,b$ satisfying this congruence and the coprimality conditions above. Then
\[
 d=\frac{n-(ak-bj)}{jk}
\]
is an integer, and \eqref{eq:modular-determinant-remainders} holds exactly when $r-q=d$. Taking $\ell=q$, all pairs with these heights, remainders, and determinant are therefore
\begin{equation}\label{eq:modular-pairs-fixed-remainders}
 (A_\ell,j),\ (B_\ell,k),
 \qquad
 A_\ell=a+(\ell+d)j,\quad B_\ell=b+\ell k,
 \qquad\ell\in\Z.
\end{equation}
Each pair occurs for exactly one $\ell$. By \eqref{eq:Phi-jkn}, the number of choices of $a,b$ for fixed $j,k,n$ is $\Phi_{j,k}(n)$.

We now compute the contribution to \eqref{eq:modular-expand-pairs} from one fixed choice of $j,k,n,a,b$. We first integrate in $t$, keeping $s$ fixed. For each $\ell\in\Z$, set
\[
 x=sA_\ell+tj.
\]
Then $dx=j\,dt$, and the first transformed vector is $(x,j/s)$. Since $A_\ell k-B_\ell j=n$, the second transformed vector has first coordinate
\begin{align*}
 sB_\ell+tk
 &=\frac{k}{j}(sA_\ell+tj)
   +s\left(B_\ell-\frac{k}{j}A_\ell\right)\\
 &=\frac{k}{j}x-\frac{sn}{j}.
\end{align*}
Its second coordinate is $k/s$. Thus, in these coordinates, the value of $F$ is
\[
 F\left(\left(x,\frac{j}{s}\right),
         \left(\frac{k}{j}x-\frac{sn}{j},\frac{k}{s}\right)\right),
\]
which depends on $j,k,n,s,x$, but not on $a,b$, or $\ell$.

As $t$ runs through $(1-s,1]$, the variable $x$ runs through
\[
 I_\ell(s)=\bigl(sA_\ell+j(1-s),\ sA_\ell+j\bigr].
\]
This interval has length $sj$. Since $A_{\ell+1}=A_\ell+j$, the left endpoint of $I_{\ell+1}(s)$ equals the right endpoint of $I_\ell(s)$. These intervals therefore partition $\R$ as $\ell$ ranges over $\Z$. It follows that
\begin{equation}\label{eq:modular-unfold-t}
 \begin{aligned}
 &\int_{1-s}^{1}\sum_{\ell\in\Z}
 F\bigl(p_{s,t}(A_\ell,j),p_{s,t}(B_\ell,k)\bigr)\,dt\\
 &\quad=\frac{1}{j}\sum_{\ell\in\Z}\int_{I_\ell(s)}
 F\left(\left(x,\frac{j}{s}\right),
         \left(\frac{k}{j}x-\frac{sn}{j},\frac{k}{s}\right)\right)dx\\
 &\quad=\frac{1}{j}\int_\R
 F\left(\left(x,\frac{j}{s}\right),
         \left(\frac{k}{j}x-\frac{sn}{j},\frac{k}{s}\right)\right)dx.
 \end{aligned}
\end{equation}
After integrating in $s$ and including the factor $2$ from $dm$, the contribution of the pairs in \eqref{eq:modular-pairs-fixed-remainders} is therefore
\begin{equation}\label{eq:modular-one-remainder-contribution}
 \frac{2}{j}\int_0^1\int_\R
 F\left(\left(x,\frac{j}{s}\right),
         \left(\frac{k}{j}x-\frac{sn}{j},\frac{k}{s}\right)\right)dx\,ds.
\end{equation}
This contribution is the same for each of the $\Phi_{j,k}(n)$ choices of $a,b$. Summing over these choices, then over the determinants $n$ and the heights $j,k$, gives
\begin{equation}\label{eq:modular-second-before-y}
 \begin{aligned}
 \int_\Om\widehat F^{(2)}\,dm
 &=2\sum_{j,k\geq1}\sum_{n\in\Z}\frac{\Phi_{j,k}(n)}{j}
 \int_0^1\int_\R\\
 &\qquad\qquad
 F\left(\left(x,\frac{j}{s}\right),
         \left(\frac{k}{j}x-\frac{sn}{j},\frac{k}{s}\right)\right)dx\,ds.
 \end{aligned}
\end{equation}

Now set $y=j/s$, as in the proof of \cref{thm:first-general}. Then $ds=-jy^{-2}\,dy$, and $y$ decreases from $+\infty$ to $j$ as $s$ increases from $0$ to $1$. Also, $sn/j=n/y$ and $k/s=(k/j)y$. Hence
\begin{equation}\label{eq:modular-one-remainder-evaluated}
 \begin{aligned}
 &\frac{2}{j}\int_0^1\int_\R
 F\left(\left(x,\frac{j}{s}\right),
         \left(\frac{k}{j}x-\frac{sn}{j},\frac{k}{s}\right)\right)dx\,ds\\
 &\qquad=2\int_j^\infty\int_\R
 F\left((x,y),
         \left(\frac{k}{j}x-\frac{n}{y},\frac{k}{j}y\right)\right)
 \frac{1}{y^2}\,dx\,dy.
 \end{aligned}
\end{equation}
Substituting this identity into \eqref{eq:modular-second-before-y} proves \eqref{eq:modular-second} for nonnegative $F$.

If $F$ changes sign, choose $M,Y>0$ so that both vectors in every pair belonging to $\supp(F)$ lie in $[-M,M]\times(0,Y]$. A nonzero integrand on the right-hand side of \eqref{eq:modular-second} then requires $j\leq y\leq Y$ and $k\leq(k/j)y\leq Y$. It also requires $|n|\leq2MY$, since $n$ is the determinant of the two vectors. Thus only finitely many triples $(j,k,n)$ contribute. For each such triple, the integral with $|F|$ in place of $F$ is at most
\[
 2M\|F\|_\infty\int_j^\infty\frac{1}{y^2}\,dy
 =\frac{2M\|F\|_\infty}{j}<\infty.
\]
The formula for $|F|$ therefore proves absolute integrability. Applying the formula separately to the positive and negative parts of $F$ completes the proof.
\end{proof}

\begin{corollary}\label{thm:modular-second-factorial}
With the notation of \cref{thm:modular-second}, every bounded compactly supported Borel function $F:(\R\times(0,\infty))^2\to\R$ satisfies

\begin{align}\label{eq:modular-second-factorial}
 &\int_\Om\sum_{\substack{v,w\in\Z^2_{\prim,+}\\v\neq w}}
 F(p_{s,t}v,p_{s,t}w)\,dm(s,t)\notag\\
 &\qquad=
 2\sum_{j,k\geq1}\sum_{n\in\Z\setminus\{0\}}
 \Phi_{j,k}(n)
 \int_j^\infty\int_\R
 F\left(
 (x,y),
 \left(\frac{k}{j}x-\frac{n}{y},\frac{k}{j}y\right)
 \right)\frac{1}{y^2}\,dx\,dy.
\end{align}
\end{corollary}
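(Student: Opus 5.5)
The plan is to deduce this from the proof of \cref{thm:modular-second}, removing exactly the terms with determinant $n=0$. That proof organizes the pairs $(v,w)\in(\Z^2_{\prim,+})^2$ into disjoint families indexed by $(j,k,n,a,b)$, where $n=\detm(v,w)$. Each family contributes
\[
 2\int_j^\infty\int_\R F\left((x,y),\left(\tfrac{k}{j}x-\tfrac{n}{y},\tfrac{k}{j}y\right)\right)\frac{1}{y^2}\,dx\,dy.
\]
The factorial transform is obtained by inserting the indicator $\one_{v\neq w}$ into the pair sum. So I will first rewrite it as $\widehat{F}^{(2)}$ restricted to pairs with $v\neq w$. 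Then I will identify which families this restriction removes.

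By \cref{lem:pair-diagonal-general}, applied with $\Lam=\Z^2_{\prim}$ (all of whose vectors are visible), two vectors $v,w\in\Z^2_{\prim,+}$ satisfy $v=w$ if and only if $\detm(v,w)=0$. Concretely, $\detm\bigl((A,j),(B,k)\bigr)=Ak-Bj=0$ with both pairs primitive and $j,k>0$ forces $(A,j)=(B,k)$. Since the determinant is constant on each family, the condition $v\neq w$ is equivalent to restricting to $n\neq0$. For nonnegative $F$, I will rerun the decomposition \eqref{eq:modular-expand-pairs}–\eqref{eq:modular-second-before-y} with the sum over $n$ restricted to $\Z\setminus\{0\}$. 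The unfolding in $t$ and the substitution $y=j/s$ are unchanged, because they take place inside a single family. This gives \eqref{eq:modular-second-factorial}.

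Equivalently, I could apply \cref{thm:modular-second} to $F$ and subtract its $n=0$ terms. That requires the diagonal part to be finite, which follows from the absolute integrability established at the end of that proof.

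For signed $F$, I will use the same domination argument as before. Only finitely many $(j,k,n)$ contribute, and each integral is bounded by $2M\|F\|_\infty/j$, so I can split $F$ into its positive and negative parts. I expect the only delicate step to be the diagonal identification, and \cref{lem:pair-diagonal-general} already handles it.
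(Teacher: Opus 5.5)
Your proposal is correct and follows essentially the same route as the paper. The paper also uses the visibility of primitive vectors and \cref{lem:pair-diagonal-general} to identify $v\neq w$ with $n\neq0$, and then drops the $n=0$ terms from \cref{thm:modular-second}; your extra remarks on the nonnegative case and the signed case just spell out justification the paper leaves implicit.
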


\begin{proof}
Primitive integer vectors are visible, so \cref{lem:pair-diagonal-general} applies. Thus two positive primitive vectors are distinct exactly when their determinant $n$ is nonzero. Removing the terms with $n=0$ from \eqref{eq:modular-second} gives the formula.

\end{proof}


\section{Higher moments and higher factorial moments}\label{sec:higher}

The second-moment proof extends to any number of vectors. We use the first vector to choose the integration variables. Each remaining vector is then determined by its height and its determinant with the first vector. As before, one common parabolic shift accounts for the integral over the real line.

\subsection{Higher moments}

We continue with the one-cusp assumptions and the notation of \cref{sec:first}. Fix $r\in\N$. For heights $\zeta_1,\ldots,\zeta_r\in J$, not necessarily distinct, representative indices
\[
 1\leq a_\ell\leq\ph(\zeta_\ell),
 \qquad 1\leq\ell\leq r,
\]
and integers $m_2,\ldots,m_r\in\Z$, define
\begin{equation}\label{eq:D-ell-general}
 D_\ell
 =x_{\zeta_1,a_1}\zeta_\ell
 -x_{\zeta_\ell,a_\ell}\zeta_1
 +\alpha\zeta_1\zeta_\ell m_\ell,
 \qquad 2\leq\ell\leq r.
\end{equation}
Set
\begin{equation}\label{eq:V-r-general}
 V_1(x,y)=(x,y),
 \qquad
 V_\ell(x,y)
 =\left(
 \frac{\zeta_\ell}{\zeta_1}x-\frac{D_\ell}{y},
 \frac{\zeta_\ell}{\zeta_1}y
 \right),
 \quad 2\leq\ell\leq r.
\end{equation}
When $r=1$, the determinant and relative-shift data are empty, and a sum over the empty tuple has one term.

For a bounded compactly supported Borel function $F$ on $(\R\times(0,\infty))^r$, define
\[
 \widehat F^{(r)}(s,t)
 =\sum_{v_1,\ldots,v_r\in\Lam_+}
 F(p_{s,t}v_1,\ldots,p_{s,t}v_r).
\]

\Needspace{12\baselineskip}
\begin{theorem}\label{thm:r-general}
Let $\omega$ be a one-cusp lattice surface with $-\Id\in\Gam$. For every $r\in\N$ and every bounded compactly supported Borel function $F:(\R\times(0,\infty))^r\to\R$,
\begin{align}\label{eq:r-general}
 \int_{\Om_\alpha}\widehat F^{(r)}\,dm_\alpha
 &=\frac{2}{\alpha}
 \sum_{\zeta_1,\ldots,\zeta_r\in J}
 \sum_{a_1=1}^{\ph(\zeta_1)}\cdots
 \sum_{a_r=1}^{\ph(\zeta_r)}
 \sum_{m_2,\ldots,m_r\in\Z}\notag\\
 &\qquad\times
 \int_{\zeta_1}^\infty\int_\R
 F\bigl(V_1(x,y),\ldots,V_r(x,y)\bigr)
 \frac{1}{y^2}\,dx\,dy.
\end{align}

\end{theorem}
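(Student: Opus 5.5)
The proof follows the proof of \cref{thm:second-general} verbatim, now with $r$ vectors. First assume $F\geq0$, so that Tonelli's theorem justifies every exchange of sums and integrals. By \eqref{eq:height-decomp}, each ordered tuple $(v_1,\ldots,v_r)\in\Lam_+^r$ can be written uniquely as
\[
 v_\ell=(x_{\zeta_\ell,a_\ell}+q_\ell\alpha\zeta_\ell,\zeta_\ell),\qquad 1\leq\ell\leq r,
\]
with $\zeta_\ell\in J$, $1\leq a_\ell\leq\ph(\zeta_\ell)$ and $q_\ell\in\Z$. We reindex the shifts by $q=q_1$ and $m_\ell=q_1-q_\ell$ for $2\leq\ell\leq r$. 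This is a bijection of $\Z^r$, so the integral of $\widehat F^{(r)}$ splits into family contributions $C(\zeta,a,m)$, indexed by the heights, the representatives and the relative shifts. Inside each family only the common shift $q$ still varies.

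The key algebraic step is that, within a fixed family, every determinant with the first vector is independent of $q$:
\[
 \detm(v_1,v_\ell)=(x_{\zeta_1,a_1}+q\alpha\zeta_1)\zeta_\ell-(x_{\zeta_\ell,a_\ell}+(q-m_\ell)\alpha\zeta_\ell)\zeta_1=D_\ell .
\]
This agrees with \eqref{eq:D-ell-general}. Write $p_{s,t}v_1=(x,y)$ with $x=s(x_{\zeta_1,a_1}+q\alpha\zeta_1)+t\zeta_1$ and $y=\zeta_1/s$. The computation from the pair case then shows
\[
 p_{s,t}v_\ell=\Bigl(\tfrac{\zeta_\ell}{\zeta_1}x-\tfrac{sD_\ell}{\zeta_1},\tfrac{\zeta_\ell}{s}\Bigr)=V_\ell(x,y).
\]
Since $\detm$ is invariant under $p_{s,t}$, this identity holds for each $\ell$ separately, exactly as in the pair case. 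The integrand therefore depends only on $(x,s)$ and the family data, and not on $q$.

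Next we integrate in $t$ with $s$ fixed. We have $dx=\zeta_1\,dt$, and the image of $(1-\alpha s,1]$ is an interval of length $\alpha\zeta_1 s$. Replacing $q$ by $q+1$ translates this interval by $\alpha\zeta_1 s$, so as $q$ ranges over $\Z$ the intervals tile $\R$. The sum over $q$ together with the $t$-integral therefore becomes $\zeta_1^{-1}\int_\R(\cdot)\,dx$. The substitution $y=\zeta_1/s$, with $ds=-\zeta_1y^{-2}dy$, turns this into $\int_{\zeta_1}^\infty\int_\R F(V_1,\ldots,V_r)\,y^{-2}\,dx\,dy$ times $2/\alpha$. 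Summing over all family data gives \eqref{eq:r-general} for $F\geq0$. For $r=1$ the same argument reduces to \cref{thm:first-general}.

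The only real obstacle is the finiteness needed to pass to signed $F$. Choose $M,Y>0$ such that every coordinate projection of $\supp F$ lies in $[-M,M]\times(0,Y]$. Then a nonzero term forces $\zeta_\ell\leq Y$ for every $\ell$, which leaves finitely many heights and representatives by the discreteness discussion in \cref{sec:first}. It also forces $|D_\ell|=|\detm(V_1,V_\ell)|\leq 2MY$, which leaves finitely many $m_\ell$ for each fixed height and representative data. Each remaining integral is bounded by $2M\|F\|_\infty/\zeta_1$. Hence the formula applied to $|F|$ is finite, so $\widehat F^{(r)}$ is integrable. Applying the nonnegative case to $F^+$ and $F^-$ then completes the proof.
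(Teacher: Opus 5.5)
Your proposal is correct and follows essentially the same argument as the paper: you decompose tuples by heights, representatives and relative shifts $m_\ell=q_1-q_\ell$, use the $q$-independence of $\detm(v_1,v_\ell)=D_\ell$ to write each $p_{s,t}v_\ell$ as $V_\ell(x,y)$, unfold the common shift $q_1$ in the $t$-integral, substitute $y=\zeta_1/s$, and pass to signed $F$ via the same finiteness bounds ($\zeta_\ell\leq Y$, $|D_\ell|\leq 2MY$, each integral at most $2M\|F\|_\infty/\zeta_1$). You also spell out the reindexing bijection of $\Z^r$ and the tiling of the $x$-intervals more explicitly than the paper's proof does.
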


\begin{proof}
First suppose that $F\geq0$, so Tonelli's theorem applies. Write
\[
 v_\ell=(x_{\zeta_\ell,a_\ell}+q_\ell\alpha\zeta_\ell,\zeta_\ell).
\]
Use $v_1$ as the first vector and define
\[
 m_\ell=q_1-q_\ell,
 \qquad 2\leq\ell\leq r.
\]
Then
\[
 \detm(v_1,v_\ell)
 =x_{\zeta_1,a_1}\zeta_\ell
 -x_{\zeta_\ell,a_\ell}\zeta_1
 +\alpha\zeta_1\zeta_\ell(q_1-q_\ell)
 =D_\ell.
\]
As in the proof of \cref{thm:second-general}, once the heights, parabolic orbit representatives, and relative shifts $m_2,\ldots,m_r$ are fixed, exactly one free integer parameter remains, namely the common shift $q_1$.

Set
\[
 x=s(x_{\zeta_1,a_1}+q_1\alpha\zeta_1)+t\zeta_1,
 \qquad
 y=\frac{\zeta_1}{s}.
\]
The $t$-interval has length $\alpha s$, so for fixed $q_1$ the variable $x$ runs over an interval of length $\alpha\zeta_1s$. Increasing $q_1$ by one translates that interval by the same quantity. Thus the common shift unfolds the horizontal coordinate to all of $\R$.

For $\ell\geq2$, the determinant identity gives
\[
 x_{\zeta_\ell,a_\ell}+q_\ell\alpha\zeta_\ell
 =\frac{\zeta_\ell}{\zeta_1}
 (x_{\zeta_1,a_1}+q_1\alpha\zeta_1)
 -\frac{D_\ell}{\zeta_1}.
\]
Applying $p_{s,t}$ and using $s/\zeta_1=1/y$ gives \eqref{eq:V-r-general}. The Jacobian is the same as in the two-vector case,
\[
 ds\,dt=\frac{1}{y^2}\,dx\,dy,
\]
and $s\leq1$ becomes $y\geq\zeta_1$. Multiplication by $2/\alpha$ gives \eqref{eq:r-general}.

For a signed function $F$, choose $M,Y>0$ so that every vector in every tuple in $\supp(F)$ lies in $[-M,M]\times(0,Y]$. Any contributing term has $\zeta_\ell\leq Y$ for every $\ell$ and $|D_\ell|\leq2MY$ for $\ell\geq2$. There are only finitely many such heights and representatives, and \eqref{eq:D-ell-general} then permits only finitely many relative shifts. Each remaining integral with $|F|$ is at most $2M\|F\|_\infty/\zeta_1$. Applying the nonnegative formula to $|F|$ proves absolute integrability, so the result follows by applying the formula to the positive and negative parts of $F$.
\end{proof}

For factorial moments, we must determine when the vectors in a tuple are pairwise distinct. Requiring $D_\ell\neq0$ ensures that $v_\ell$ differs from $v_1$, but we must also compare the remaining vectors with one another. The following lemma gives all of these conditions in terms of the same determinants.

\begin{lemma}\label{lem:distinct-general}
Let $v_1,\ldots,v_r\in\Lam_+$ be visible vectors and put
\[
 \zeta_\ell=\pi_2(v_\ell)\quad(1\leq\ell\leq r),
 \qquad D_\ell=\detm(v_1,v_\ell)\quad(2\leq\ell\leq r).
\]
Then $v_1,\ldots,v_r$ are pairwise distinct if and only if
\begin{equation}\label{eq:distinct-general-a}
 D_\ell\neq0,
 \qquad 2\leq\ell\leq r,
\end{equation}
and
\begin{equation}\label{eq:distinct-general-b}
 \zeta_\ell D_m-\zeta_mD_\ell\neq0,
 \qquad 2\leq\ell<m\leq r.
\end{equation}
\end{lemma}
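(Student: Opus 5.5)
The plan is to reduce every pairwise comparison to \cref{lem:pair-diagonal-general}, which says that two vectors of $\Lam_+$ coincide exactly when their determinant vanishes. Pairs involving $v_1$ are handled directly: $v_1\neq v_\ell$ if and only if $D_\ell=\detm(v_1,v_\ell)\neq0$. This gives \eqref{eq:distinct-general-a}. For the remaining pairs $v_\ell,v_m$ with $2\leq\ell<m\leq r$, the task is to express $\detm(v_\ell,v_m)$ through the data $\zeta_\ell,\zeta_m,D_\ell,D_m$ together with the height $\zeta_1$.

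The key step is an explicit identity. Write $v_1=(x_1,\zeta_1)$ and $v_\ell=(x_\ell,\zeta_\ell)$. Then $D_\ell=x_1\zeta_\ell-x_\ell\zeta_1$, so
\[
 x_\ell=\frac{x_1\zeta_\ell-D_\ell}{\zeta_1}.
\]
Substituting this into $\detm(v_\ell,v_m)=x_\ell\zeta_m-x_m\zeta_\ell$ makes the $x_1$ terms cancel, and one obtains
\[
 \detm(v_\ell,v_m)=\frac{\zeta_\ell D_m-\zeta_m D_\ell}{\zeta_1}.
\]
Since $\zeta_1>0$, we get $\detm(v_\ell,v_m)=0$ if and only if $\zeta_\ell D_m-\zeta_mD_\ell=0$. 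Applying \cref{lem:pair-diagonal-general} to $v_\ell,v_m$ then gives $v_\ell\neq v_m$ if and only if \eqref{eq:distinct-general-b} holds.

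Combining the two cases covers all pairs of indices, which proves both directions of the equivalence at once. The only real obstacle is the algebraic identity above, and it is a short computation. Equivalently, one can see it from \eqref{eq:V-r-general}: after applying $p_{s,t}$, the determinant of $V_\ell$ and $V_m$ is computed directly, and it equals $(\zeta_\ell D_m-\zeta_mD_\ell)/\zeta_1$ because $\detm$ is invariant under $\G$.
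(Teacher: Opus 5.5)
Your proposal is correct and follows essentially the same route as the paper. Both reduce each pairwise comparison to \cref{lem:pair-diagonal-general} and use the identity $\detm(v_\ell,v_m)=(\zeta_\ell D_m-\zeta_mD_\ell)/\zeta_1$, obtained by eliminating the first coordinates, with $\zeta_1>0$ making the two vanishing conditions equivalent.
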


\begin{proof}
By Lemma~\ref{lem:pair-diagonal-general}, $v_1=v_\ell$ exactly when $D_\ell=0$. For $2\leq\ell<m\leq r$, write $v_i=(A_i,\zeta_i)$. Since
\[
 D_\ell=A_1\zeta_\ell-A_\ell\zeta_1,
 \qquad
 D_m=A_1\zeta_m-A_m\zeta_1,
\]
the definitions give
\begin{equation}\label{eq:det-lm-general}
 \detm(v_\ell,v_m)
 =A_\ell\zeta_m-A_m\zeta_\ell
 =\frac{\zeta_\ell D_m-\zeta_mD_\ell}{\zeta_1}.
\end{equation}
Visibility again implies that $v_\ell=v_m$ if and only if this determinant is zero. This proves the criterion.
\end{proof}

Let $\cA(\zeta_1,\ldots,\zeta_r;a_1,\ldots,a_r)$ be the set of integer tuples $(m_2,\ldots,m_r)$ for which \eqref{eq:distinct-general-a} and \eqref{eq:distinct-general-b} hold.

\begin{theorem}\label{thm:factorial-r-general}
Under the hypotheses of \cref{thm:r-general}, let $r\in\N$ with $r\geq2$, and let $F:(\R\times(0,\infty))^r\to\R$ be a bounded compactly supported Borel function. Then
\begin{align}\label{eq:factorial-r-general}
 &\int_{\Om_\alpha}
 \sum_{v_1,\ldots,v_r\in\Lam_+}^{\neq}
 F(p_{s,t}v_1,\ldots,p_{s,t}v_r)\,dm_\alpha(s,t)\notag\\
 &=\frac{2}{\alpha}
 \sum_{\zeta_1,\ldots,\zeta_r\in J}
 \sum_{a_1=1}^{\ph(\zeta_1)}\cdots
 \sum_{a_r=1}^{\ph(\zeta_r)}
 \sum_{(m_2,\ldots,m_r)\in\cA}\notag\\
 &\qquad\times\int_{\zeta_1}^\infty\int_\R
 F\bigl(V_1(x,y),\ldots,V_r(x,y)\bigr)
 \frac{1}{y^2}\,dx\,dy.
\end{align}
\end{theorem}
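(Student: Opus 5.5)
The plan is to deduce the factorial formula from \cref{thm:r-general} by applying it to a modified test function that encodes the distinctness condition. Since distinctness is not a property of the transformed vectors alone in an obvious way, I will instead work at the level of the decomposition in the proof of \cref{thm:r-general}.

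First, I will treat $F\geq0$, so that Tonelli's theorem justifies all rearrangements. As in that proof, every $r$-tuple in $\Lam_+^r$ has a unique representation by heights $\zeta_\ell$, representative indices $a_\ell$, and shifts $q_\ell$. The reindexing $(q_1,\ldots,q_r)\leftrightarrow(q_1,m_2,\ldots,m_r)$ with $m_\ell=q_1-q_\ell$ is a bijection of $\Z^r$. Hence the factorial transform is the sum over the families indexed by $(\zeta,a,m)$ of the sums over $q_1$, restricted to those tuples whose vectors are pairwise distinct.

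The key point is that within one family, distinctness does not depend on $q_1$. By \eqref{eq:D-ell-general}, each $\detm(v_1,v_\ell)=D_\ell$ is determined by $(\zeta,a,m)$, and the heights $\zeta_\ell$ are fixed. So \cref{lem:distinct-general} shows that every tuple in the family is pairwise distinct precisely when $(m_2,\ldots,m_r)\in\cA(\zeta_1,\ldots,\zeta_r;a_1,\ldots,a_r)$, and otherwise no tuple in it is. The factorial transform is therefore exactly the sum of the family contributions over $m\in\cA$.

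Each family contribution was already computed in the proof of \cref{thm:r-general}. Unfolding in $t$ over the common shift $q_1$, and then substituting $y=\zeta_1/s$, yields
\[
\frac{2}{\alpha}\int_{\zeta_1}^\infty\int_\R F\bigl(V_1,\ldots,V_r\bigr)\frac{1}{y^2}\,dx\,dy .
\]
Summing these over $m\in\cA$ gives \eqref{eq:factorial-r-general} for nonnegative $F$.

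For signed $F$, the factorial sum of $|F|$ is dominated termwise by the full sum $\widehat{|F|}^{(r)}$. That sum is integrable by the finiteness part of \cref{thm:r-general}, so the factorial transform is integrable. The result then follows by splitting $F$ into positive and negative parts.

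The only step needing care is the claim that membership in $\cA$ is constant along each family, that is, independent of $q_1$. This is immediate from the $q_1$-independence of the $D_\ell$, so I expect no real obstacle. The argument is essentially a bookkeeping restriction of the proof of \cref{thm:r-general}.
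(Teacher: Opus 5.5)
Your proposal is correct and follows the paper's proof: you partition ordered tuples into families indexed by heights, representatives, and relative shifts, use \cref{lem:distinct-general} to see that pairwise distinctness depends only on the family data through the $q_1$-independent $D_\ell$ and the heights, and keep exactly the families with $(m_2,\ldots,m_r)\in\cA$ before reusing the unfolding from \cref{thm:r-general}. Your explicit treatment of signed $F$, which dominates the factorial sum by $\widehat{|F|}^{(r)}$, fills in a step the paper leaves implicit.
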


\begin{proof}
The tuple unfolding in \cref{thm:r-general} partitions all ordered vector tuples according to their heights, parabolic orbit representatives, and relative shifts. By Lemma~\ref{lem:distinct-general}, the tuple is pairwise distinct exactly when the relative shifts lie in $\cA$. We therefore keep exactly these relative shifts in the sum from \cref{thm:r-general}. The change of variables and its Jacobian remain the same.
\end{proof}

\subsection{Higher factorial moments for the modular case}\label{subsec:higher-modular}

For the square torus, we can again combine all residue choices that give the same determinants. This is the higher-moment version of the coefficient $\Phi_{j,k}(n)$ in \cref{subsec:second-modular}. For positive integers $j_1,\ldots,j_r$ and integers $n_2,\ldots,n_r$, define
\begin{align}\label{eq:Phi-general-modular}
 \Phi_{j_1,\ldots,j_r}(n_2,\ldots,n_r)
 =\#\Bigl\{&(a_1,\ldots,a_r)
 \in\prod_{\ell=1}^r(\Z/j_\ell\Z)^\times:\notag\\
 &a_1j_\ell-a_\ell j_1\equiv n_\ell
 \pmod{j_1j_\ell},\quad 2\leq\ell\leq r
 \Bigr\}.
\end{align}
Define
\begin{equation}\label{eq:V-general-modular}
 V_1(x,y)=(x,y),
 \qquad
 V_\ell(x,y)
 =\left(\frac{j_\ell}{j_1}x-\frac{n_\ell}{y},
 \frac{j_\ell}{j_1}y\right),
 \quad 2\leq\ell\leq r.
\end{equation}
The congruences in \eqref{eq:Phi-general-modular} decide whether these determinant data can arise from primitive vectors. We impose distinctness separately. Let $\cA(j_1,\ldots,j_r)$ be the set of determinant tuples $(n_2,\ldots,n_r)\in\Z^{r-1}$ such that
\begin{equation}\label{eq:mod-distinct-1}
 n_\ell\neq0,
 \qquad 2\leq\ell\leq r,
\end{equation}
and
\begin{equation}\label{eq:mod-distinct-2}
 j_\ell n_m-j_mn_\ell\neq0,
 \qquad 2\leq\ell<m\leq r.
\end{equation}

\Needspace{12\baselineskip}
\begin{theorem}\label{thm:modular-higher-factorial}
Let $\Om$ be the BCZ triangle with $dm=2\,ds\,dt$, and let $\Lam=\Z^2_{\prim}$. Let $r\in\N$ with $r\geq2$, and let $F:(\R\times(0,\infty))^r\to\R$ be a bounded compactly supported Borel function. Then
\begin{align}\label{eq:modular-higher-factorial}
 &\int_\Om
 \sum_{v_1,\ldots,v_r\in\Z^2_{\prim,+}}^{\neq}
 F(p_{s,t}v_1,\ldots,p_{s,t}v_r)\,dm(s,t)\notag\\
 &=2\sum_{j_1,\ldots,j_r\geq1}
 \sum_{(n_2,\ldots,n_r)\in\cA(j_1,\ldots,j_r)}
 \Phi_{j_1,\ldots,j_r}(n_2,\ldots,n_r)\notag\\
 &\qquad\times
 \int_{j_1}^\infty\int_\R
 F\bigl(V_1(x,y),\ldots,V_r(x,y)\bigr)
 \frac{1}{y^2}\,dx\,dy.
\end{align}
\end{theorem}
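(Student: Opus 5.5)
We specialize \cref{thm:factorial-r-general} to $\Gam=\mathrm{SL}(2,\Z)$ and $\Lam=\Z^2_{\prim}$, and then regroup the terms by determinant, exactly as \cref{thm:modular-second} regroups \cref{thm:second-general}.

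\textbf{Specializing the data.} Here $\alpha=1$ and $J=\N$. For each height $j$ we take the representatives $x_{j,a}=a$, where $0\leq a<j$ and $\gcd(a,j)=1$; when $j=1$ we take $a=0$. These are identified with $(\Z/j\Z)^\times$. The determinants of \eqref{eq:D-ell-general} then become
\[
 n_\ell=D_\ell=a_1j_\ell-a_\ell j_1+j_1j_\ell m_\ell ,
\]
which are integers. The vectors $V_\ell$ in \eqref{eq:V-r-general} coincide with \eqref{eq:V-general-modular}, and the prefactor $2/\alpha$ equals $2$.

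\textbf{Regrouping.} Fix the heights $j_1,\ldots,j_r$ and the residues $a_1,\ldots,a_r$. The map $(m_2,\ldots,m_r)\mapsto(n_2,\ldots,n_r)$ is injective, since each $n_\ell$ is an affine function of $m_\ell$ with slope $j_1j_\ell\neq0$. Its image is the set of integer tuples satisfying
\[
 n_\ell\equiv a_1j_\ell-a_\ell j_1\pmod{j_1j_\ell},\qquad 2\leq\ell\leq r .
\]
The integrand depends only on $(j_\ell)$ and $(n_\ell)$. We can therefore exchange the order of summation: for fixed heights, sum over $(n_2,\ldots,n_r)\in\Z^{r-1}$, and count the residue tuples compatible with the congruences. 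By \eqref{eq:Phi-general-modular} this count is exactly $\Phi_{j_1,\ldots,j_r}(n_2,\ldots,n_r)$.

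\textbf{Distinctness.} The distinctness conditions must also pass to the new variables. By \cref{lem:distinct-general}, with $\zeta_\ell=j_\ell$ and $D_\ell=n_\ell$, the set $\cA$ of relative shifts in \cref{thm:factorial-r-general} corresponds under the bijection to the set of tuples $(n_\ell)$ satisfying \eqref{eq:mod-distinct-1} and \eqref{eq:mod-distinct-2}. This is precisely $\cA(j_1,\ldots,j_r)$. Because these conditions involve only the $n_\ell$ and the $j_\ell$, and not the residues, the restriction commutes with the regrouping.

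\textbf{Justification and main obstacle.} We first work with $F\geq0$, so that Tonelli's theorem justifies the rearrangement. For signed $F$ we use the finiteness argument already given in \cref{thm:r-general}: only finitely many heights occur, and $|n_\ell|\leq2MY$. The only step needing care is the claim that the residue-to-determinant correspondence is exactly counted by $\Phi$, including the conventions when some $j_\ell=1$. This is a routine check of the bijection, parallel to the two-vector case in \cref{thm:modular-second}.
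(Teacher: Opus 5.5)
Your proposal is correct and follows essentially the same route as the paper. You specialize the one-cusp factorial formula (with $\alpha=1$, $J=\N$, and residues in $(\Z/j\Z)^\times$), note that fixing the determinants $n_\ell$ fixes the relative shifts so that only one common shift is unfolded, transfer distinctness via \cref{lem:distinct-general}, and group residue tuples by determinant data to obtain $\Phi_{j_1,\ldots,j_r}(n_2,\ldots,n_r)$. Your explicit injectivity-and-image description of $(m_\ell)\mapsto(n_\ell)$, and your observation that the distinctness conditions depend only on $(j_\ell,n_\ell)$, just spell out bookkeeping that the paper leaves implicit.
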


\begin{proof}
At height $j_\ell$, write
\[
 v_\ell=(a_\ell+q_\ell j_\ell,j_\ell),
 \qquad
 0\leq a_\ell<j_\ell,\qquad \gcd(a_\ell,j_\ell)=1.
\]
When $j_\ell=1$, we take $a_\ell=0$.
For $\ell\geq2$, define the determinant with the first vector by
\begin{equation}\label{eq:n-ell-modular}
 n_\ell=\detm(v_1,v_\ell)
 =a_1j_\ell-a_\ell j_1+j_1j_\ell(q_1-q_\ell).
\end{equation}
A residue tuple $(a_1,\ldots,a_r)$ contributes to fixed determinant data $(n_2,\ldots,n_r)$ exactly when the congruences in \eqref{eq:Phi-general-modular} hold. If they hold, each difference $q_1-q_\ell$ is uniquely determined by \eqref{eq:n-ell-modular}. Hence one common shift $q_1$ remains. Unfolding that common shift produces $x\in\R$ exactly as in \cref{thm:r-general}.

The determinant identity gives the transformed vectors in the form \eqref{eq:V-general-modular}. Finally, the pairwise distinctness criterion \eqref{eq:mod-distinct-1}--\eqref{eq:mod-distinct-2} is the specialization of Lemma~\ref{lem:distinct-general}. Grouping the one-cusp formula by determinant data proves the theorem.
\end{proof}


\section{Farey pair correlation from the first cross-sectional moment}\label{sec:farey}

Boca and Zaharescu proved the existence of all correlation measures of Farey fractions and computed the pair-correlation density explicitly \cite{BocaZaharescu2005}. We recover their formula using \cref{thm:first-general} and dynamical methods.

\subsection{Return time statistics}\label{subsec:bcz}

For $Q\in\N$, the Farey sequence of order $Q$ consists of the reduced fractions in $[0,1]$ with denominator at most $Q$:
\begin{equation}\label{eq:farey-sequence}
\begin{aligned}
\cF_Q
&=\left\{\frac{a}{q}: a,q\in\Z,\ 1\leq q\leq Q,\ 
0\leq a\leq q,\ \gcd(a,q)=1\right\} \\
&=\{0=\gamma_0<\gamma_1<\cdots<\gamma_{N_Q}=1\}.
\end{aligned}
\end{equation}

The BCZ map on $\Omega$ is defined by

\begin{equation}\label{eq:bcz-map-return-time}
 T(s,t)=\left(t,\left\lfloor\frac{1+s}{t}\right\rfloor t-s\right).
\end{equation}

For $(s,t)\in\Om$, let $p=p_{s,t}$. A primitive vector $v\in\Z^2_{\prim,+}$ with
\[
 pv=(x,y),
 \qquad 0<x\leq1,
\]
becomes horizontal under the horocycle flow at time
\begin{equation}\label{eq:hitting-time}
 \tau(v)=\frac{y}{x}.
\end{equation}
Conversely, every positive return to the section comes from such a vector. The vector is unique because a lattice has only one primitive vector on each positive ray. Thus counting future return times is equivalent to counting these primitive vectors.

\subsection{Periodic BCZ orbits}
Write $N_Q=|\mathcal F_Q|-1$, the number of Farey fractions in $[0,1)$. Then

\begin{equation}\label{eq:NQ-asymp}
 N_Q=\sum_{q=1}^Q\phi(q),
 \qquad \lim_{Q\to\infty}\frac{N_Q}{Q^2}=\frac{3}{\pi^2}.
\end{equation}

Write $\gamma_i=a_i/q_i$ in lowest terms. Extend the sequence by $\gamma_{i+N_Q}=\gamma_i+1$ and the denominators by $q_{i+N_Q}=q_i$. The BCZ map satisfies
\[
T\left(\frac{q_i}{Q},\frac{q_{i+1}}{Q}\right)=\left(\frac{q_{i+1}}{Q},\frac{q_{i+2}}{Q}\right).
\]

Set
\begin{equation}\label{eq:farey-orbit-coordinates}
 z_{Q,i}=\left(\frac{q_i}{Q},\frac{q_{i+1}}{Q}\right)
        =T^i\left(\frac1Q,1\right).
\end{equation}

The point $z_{Q,i}$ has period $N_Q$ under $T$. By \cite[Theorem~1.3]{AthreyaCheung2014}, the probability measures
\[
 \nu_Q=\frac1{N_Q}\sum_{i=0}^{N_Q-1}\delta_{z_{Q,i}}
\]
converge weakly to $m$ as $Q\to\infty$.

Athreya and Cheung \cite{AthreyaCheung2014} showed that the first-return time of the horocycle flow at every point $(s,t)\in\Omega$ is
\[
 R(s,t)=\frac{1}{st}\geq1.
\]
In particular, the first-return time at $z_{Q,i}$ is $R(z_{Q,i})$. Let $\tau_{i,r}$ be the time of the $r$th future return, with $\tau_{i,0}=0$. The identity $\gamma_{i+1}-\gamma_i=1/(q_iq_{i+1})$ for consecutive Farey fractions gives

\begin{equation}\label{eq:rth-future-time}
 \begin{aligned}
 \tau_{i,r}
 &=\sum_{\ell=0}^{r-1}R(z_{Q,i+\ell})
  =Q^2\sum_{\ell=0}^{r-1}\frac{1}{q_{i+\ell}q_{i+\ell+1}}\\
 &=Q^2(\gamma_{i+r}-\gamma_i).
 \end{aligned}
\end{equation}

\subsection{Pair correlation formula}\label{subsec:pair-correlation}
Set
\begin{equation}\label{eq:cQ-c}
 c_Q=\frac{N_Q}{Q^2},
 \qquad
 c=\frac3{\pi^2}.
\end{equation}
Let $H\in C_c((0,\infty))$ be real-valued. The two-level correlation measure of the Farey fractions, tested on the positive half-line, is
\begin{equation}\label{eq:RQ-pair}
 \mathcal R_Q^{(2)}(H)
 =\frac1{N_Q}
 \sum_{0\leq i<j\leq N_Q-1}
 H\bigl(N_Q(\gamma_j-\gamma_i)\bigr).
\end{equation}
Because $H$ is supported in $(0,\infty)$, each ordered pair with positive difference occurs exactly once in the sum with $i<j$. Thus \eqref{eq:RQ-pair} is the restriction of the usual two-level correlation measure to positive differences, which is the convention used for the density $g_2$ below.

For $c'>0$, define
\begin{equation}\label{eq:f-H-c}
 f_{H,c'}(x,y)
 =\begin{cases}
 H(c'y/x),&0<x\leq1\text{ and }y\geq1,\\
 0,&\text{otherwise}.
 \end{cases}
\end{equation}
This is a bounded compactly supported Borel function. Indeed, if $\supp(H)\subset[a,B]$ with $0<a<B$, a nonzero value requires
\[
 \frac{c'}{B}\leq x\leq1,
 \qquad 1\leq y\leq\frac{B}{c'}.
\]
The condition $y\geq1$ does not remove any positive primitive vector on the BCZ section, since its transformed height is $j/s\geq1$. Let
\[
 G_{H,c'}(s,t)=\widehat{f_{H,c'}}_+(s,t).
\]

\begin{lemma}\label{lem:return-observable-pair}
For every $Q\in\N$ and $0\leq i<N_Q$,
\begin{equation}\label{eq:return-observable-pair}
 G_{H,c_Q}(z_{Q,i})
 =\sum_{r\geq1}
 H\bigl(N_Q(\gamma_{i+r}-\gamma_i)\bigr),
\end{equation}
where the Farey sequence is periodically extended via $\gamma_{i+N_Q}=\gamma_i+1$.
\end{lemma}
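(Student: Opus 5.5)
Unwinding the definitions gives
\[
 G_{H,c_Q}(z_{Q,i})=\sum_{v}H\bigl(c_Q\,\tau(v)\bigr),
\]
where $v$ ranges over the vectors $v\in\Z^2_{\prim,+}$ with $p_{z_{Q,i}}v=(x,y)$, $0<x\leq1$. The condition $y\geq1$ is automatic, since $y=j/s\geq j\geq1$ on the section. Here $\tau(v)=y/x$ is the hitting time from \eqref{eq:hitting-time}. The plan is to show that these vectors are in bijection with the future returns $r\geq1$ of the horocycle orbit starting at $z_{Q,i}$, with $\tau(v)=\tau_{i,r}$. Then \eqref{eq:rth-future-time} gives
\[
 c_Q\tau_{i,r}=\frac{N_Q}{Q^2}\,Q^2(\gamma_{i+r}-\gamma_i)=N_Q(\gamma_{i+r}-\gamma_i),
\]
which is exactly the summand in \eqref{eq:return-observable-pair}.

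\textbf{Step 1: vectors to returns.} Fix $p=p_{s,t}$ with $(s,t)=z_{Q,i}$. For $v$ with $pv=(x,y)$, $x>0$, we compute $u_rpv=(x,y-rx)$. This vector is horizontal precisely at $r=y/x>0$, where it equals $(x,0)$ with $0<x\leq1$. Hence $u_{\tau(v)}p\Gam\in\mathcal L$, so each such $v$ yields a positive time at which the orbit lies in the section. Conversely, if $u_rp\Gam\in\mathcal L$ with $r>0$, some $w\in\Lam$ has $u_rpw\in(0,1]\times\{0\}$. Since $u_r$ preserves first coordinates, $pw$ has first coordinate in $(0,1]$ and second coordinate $r\cdot x>0$. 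Because $s>0$, the vector $w$ has positive height, so $w\in\Z^2_{\prim,+}$. This is the converse already recorded after \eqref{eq:hitting-time}.

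\textbf{Step 2: injectivity.} Distinct $v$ give distinct times. If $\tau(v)=\tau(v')$, then $pv$ and $pv'$ lie on the same positive ray, and so do $v$ and $v'$. Both are primitive, so by \cref{lem:pair-diagonal-general} (zero determinant) $v=v'$. Thus the set of $v$ in the sum is in bijection with the set of positive hitting times $\{\tau_{i,r}:r\geq1\}$. Both sets are listed increasingly, and the $r$th hitting time is $\tau_{i,r}$ by definition of successive returns.

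\textbf{Step 3: identify the return times with Farey gaps.} This uses the Athreya--Cheung facts quoted above: the return map is the BCZ map $T$, the return time is $R(s,t)=1/(st)$, and $T^\ell z_{Q,i}=z_{Q,i+\ell}$. Together these give $\tau_{i,r}=\sum_{\ell<r}R(z_{Q,i+\ell})$, which \eqref{eq:rth-future-time} turns into $Q^2(\gamma_{i+r}-\gamma_i)$, with indices extended periodically (the orbit of $z_{Q,i}$ has period $N_Q$).

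\textbf{Finiteness and the main obstacle.} Only finitely many terms of the sum are nonzero, since $H$ has compact support in $(0,\infty)$ and $\gamma_{i+r}-\gamma_i\to\infty$. The delicate point is Step 1's claim that every time the orbit lies in the section is one of the successive first returns $\tau_{i,r}$, and vice versa. This must hold with no missed or double-counted visits, including boundary conventions ($x=1$, the half-open triangle). I would handle this by citing \cite{AthreyaCheung2014} for the identification of $\mathcal L$ with $\Om$ and of the return map with $T$. I would also check directly that the visit with $pv=(1,\cdot)$ is counted exactly once, using the half-open conventions $0<x\leq1$ and $t\leq1$.
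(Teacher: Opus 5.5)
Your proposal is correct and follows the paper's argument: identify the vectors in the sum defining $G_{H,c_Q}(z_{Q,i})$ with the positive returns of the horocycle orbit, then apply \eqref{eq:rth-future-time} to get $c_Q\tau_{i,r}=N_Q(\gamma_{i+r}-\gamma_i)$. Your Steps 1--2 spell out the bijection that the paper only asserts after \eqref{eq:hitting-time}, and that bijection already resolves the boundary issue you flag, because $\mathcal L$ and $f_{H,c_Q}$ use the same half-open condition $0<x\leq1$.
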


\begin{proof}
The vectors in the defining sum for $G_{H,c_Q}$ represent the positive future returns of the horocycle orbit, weighted by $H$. A future hit at time $\tau$ contributes $H(c_Q\tau)$. By \eqref{eq:rth-future-time},
\[
 c_Q\tau_{i,r}
 =\frac{N_Q}{Q^2}Q^2(\gamma_{i+r}-\gamma_i)
 =N_Q(\gamma_{i+r}-\gamma_i).
\]
Compact support of $H$ and the lower bound $R(s,t)=1/(st)\geq1$ imply that only finitely many future returns can contribute.
\end{proof}

\Needspace{16\baselineskip}
\begin{lemma}\label{lem:pair-wrap-regularity}
Let $H\in C_c((0,\infty))$ be real-valued, then
\begin{equation}\label{eq:pair-periodic-average}
 \mathcal R_Q^{(2)}(H)
 =\frac1{N_Q}\sum_{i=0}^{N_Q-1}G_{H,c_Q}(z_{Q,i}).
\end{equation}
The function $G_{H,c}$ is bounded and Riemann integrable on the BCZ
triangle $\Om$. Moreover,
\begin{equation}\label{eq:pair-uniform-c}
 \sup_{(s,t)\in\Om}|G_{H,c_Q}(s,t)-G_{H,c}(s,t)|\longrightarrow0.
\end{equation}
\end{lemma}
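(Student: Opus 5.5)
The plan is to treat the three assertions separately: the periodic-average identity \eqref{eq:pair-periodic-average} by combinatorics on the extended Farey sequence, and the regularity statements by showing that $G_{H,c'}$ is a sum of uniformly boundedly many explicit terms whose indices stay in a fixed finite set for all $c'$ near $c$.

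\textbf{Step 1: the identity.} By \cref{lem:return-observable-pair},
\[
 \frac1{N_Q}\sum_{i=0}^{N_Q-1}G_{H,c_Q}(z_{Q,i})
 =\frac1{N_Q}\sum_{i=0}^{N_Q-1}\sum_{r\ge1}H\bigl(N_Q(\gamma_{i+r}-\gamma_i)\bigr).
\]
I will compare this with \eqref{eq:RQ-pair}. Fix $\supp(H)\subset[a,B]$.

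\begin{itemize}
\item Terms with $i+r\le N_Q-1$ are exactly the pairs $i<j=i+r$ appearing in $\mathcal R_Q^{(2)}(H)$.
\item Terms with $i+r\ge N_Q$ have $\gamma_{i+r}-\gamma_i\ge 1-\gamma_i\ge 1-\gamma_{N_Q-1}=1/Q$. Their argument is therefore at least $N_Q/Q$.
\end{itemize}

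By \eqref{eq:NQ-asymp}, $N_Q/Q\sim 3Q/\pi^2$, so this exceeds $B$ once $Q$ is large. For such $Q$ the wrap-around terms vanish and the identity holds exactly. I expect the identity to hold only for these large $Q$; that suffices for the limiting argument. If it is needed for every $Q$, I would check directly whether the small cases are compatible with the support of $H$. I expect this bookkeeping to be the only delicate point of the lemma.

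\textbf{Step 2: finiteness and boundedness.} Let $c'\in[c/2,2c]$, which contains $c_Q$ for large $Q$. A contributing vector $(A,j)\in\Z^2_{\prim,+}$ at $(s,t)\in\Om$ has transformed coordinates $x=sA+tj$ and $y=j/s$. A nonzero term forces
\[
 \frac{c}{2B}\le x\le 1,\qquad 1\le y\le \frac{2B}{c}.
\]
The height bound gives $j\le 2B/c$ and $s=j/y\ge c/(2B)$. Then $|A|=|x-tj|/s$ is bounded by a constant depending only on $B$ and $c$. Hence there is a finite set $S\subset\Z^2_{\prim,+}$, independent of $(s,t)$ and of $c'$, containing every contributing vector. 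It follows that
\[
 |G_{H,c'}|\le \#S\,\|H\|_\infty \quad\text{on }\Om.
\]

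\textbf{Step 3: Riemann integrability.} For each $v=(A,j)\in S$, the map $(s,t)\mapsto H\bigl(c\,(j/s)/(sA+tj)\bigr)\one_{0<sA+tj\le1}$ is continuous off the two lines $sA+tj=0$ and $sA+tj=1$. The constraint $y\ge1$ is automatic on $\Om$, so it introduces no further discontinuities. Thus $G_{H,c}$ is a bounded function whose discontinuities lie in finitely many lines, a Lebesgue-null set. This gives Riemann integrability on $\Om$.

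\textbf{Step 4: uniform convergence.} For $v\in S$, the terms of $G_{H,c_Q}$ and $G_{H,c}$ share the indicator $\one_{0<x\le1}$ and differ only through $H(c_Qy/x)-H(cy/x)$. Whenever either term is nonzero, Step 2 gives $y/x\le K:=(2B/c)^2$. Letting $\omega_H$ denote the modulus of continuity of $H$, which is uniformly continuous since it has compact support, each difference is at most $\omega_H(K|c_Q-c|)$. Therefore
\[
 \sup_{\Om}|G_{H,c_Q}-G_{H,c}|\le \#S\,\omega_H\bigl(K|c_Q-c|\bigr)\longrightarrow 0,
\]
since $c_Q\to c$ by \eqref{eq:NQ-asymp}. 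This proves \eqref{eq:pair-uniform-c}.
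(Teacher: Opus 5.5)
Your proposal is correct and follows the paper's proof essentially step for step. It uses the same wrap-around estimate $\gamma_{i+r}-\gamma_i\geq 1/Q$; as you rightly note, this gives the identity only once $N_Q/Q>B$, and the paper's argument has the same restriction. It also uses the same finite set of contributing vectors for $c'\in[c/2,2c]$, the same discontinuity lines $sA+tj=0,1$, and the same modulus-of-continuity bound. The only cosmetic difference is in the last step: you count contributing terms by $\#S$ with the cruder bound $\tau\leq(2B/c)^2$, whereas the paper counts at most $\lceil L\rceil$ return times using $R(s,t)\geq1$ and $\tau\leq L=2B/c$; both work.
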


\begin{proof}
Choose $0<a<B$ such that $\supp(H)\subset[a,B]$.
We first compare the two sums in \eqref{eq:pair-periodic-average}.
By \cref{lem:return-observable-pair}, the BCZ orbit average is
\[
 \frac1{N_Q}\sum_{i=0}^{N_Q-1}\sum_{r\geq1}
 H\bigl(N_Q(\gamma_{i+r}-\gamma_i)\bigr),
\]
where $\gamma_{i+N_Q}=\gamma_i+1$. The terms with $i+r<N_Q$
are exactly those in $\mathcal R_Q^{(2)}(H)$, with $j=i+r$.
The remaining terms have $i+r\geq N_Q$, so they compare a Farey
fraction in $[0,1)$ with a fraction in the periodic extension at or
beyond $1$. Since $\gamma_{N_Q-1}=1-1/Q$ and $\gamma_{N_Q}=1$,
\[
 \gamma_{i+r}-\gamma_i\geq\frac1Q
 \qquad\text{whenever }0\leq i<N_Q\text{ and }i+r\geq N_Q.
\]
Their arguments in $H$ are therefore at least $N_Q/Q$.
By \eqref{eq:NQ-asymp}, $N_Q/Q\to\infty$, so these terms vanish
once $N_Q/Q>B$. This proves \eqref{eq:pair-periodic-average}.

We next prove boundedness and Riemann integrability by identifying
which primitive lattice vectors can contribute to $G_{H,c'}$.
Set $L=2B/c$ and suppose that $c'\in[c/2,2c]$.
For $(A,j)\in\Z^2_{\prim,+}$, write
\[
 p_{s,t}(A,j)=(x,y)=\left(sA+tj,\frac{j}{s}\right).
\]
Its contribution is $H(c'y/x)$ when $0<x\leq1$.
The condition $y\geq1$ in the definition of $f_{H,c'}$ is automatic,
since $j\geq1$ and $s\leq1$. If this contribution is nonzero, then
\[
 \frac{y}{x}\leq\frac{B}{c'}\leq L,
 \qquad
 1\leq j\leq L,
 \qquad
 s\geq\frac1L.
\]
Indeed, $y\leq Lx\leq L$ and $y=j/s$. Also, since $0<t\leq1$,
\[
 |A|=\frac{|x-tj|}{s}\leq L(1+L).
\]
Thus, for every $(s,t)\in\Om$ and every $c'\in[c/2,2c]$,
only vectors in the same finite set
\[
 W=
 \bigl\{(A,j)\in\Z^2_{\prim,+}:
       1\leq j\leq L,\ |A|\leq L(1+L)\bigr\}
\]
can contribute to $G_{H,c'}$. For all $(s,t)\in\Om$ and
$c'\in[c/2,2c]$, we therefore have
\[
 |G_{H,c'}(s,t)|\leq \#W\,\|H\|_\infty.
\]
For each of these vectors, its contribution is continuous away from
the line segments $sA+tj=0$ and $sA+tj=1$ in $\Om$.
There is no discontinuity arising from the support of $H$, because
$H$ is continuous and vanishes outside $[a,B]$.
These line segments in the $(s,t)$-plane are precisely the loci where
the transformed vector $p_{s,t}(A,j)$, for $(A,j)\in W$, has first
coordinate $0$ or $1$. Thus the discontinuities of $G_{H,c}$ lie
in a finite union of line segments. Define $G_{H,c}=0$ on $\overline\Om\setminus\Om$,
preserving its values at all points already in $\Om$. This extension
adds possible discontinuities only on the boundary of the closed triangle. The resulting bounded function has a discontinuity set of
Lebesgue measure zero, so it is Riemann integrable.

Finally, we compare $G_{H,c_Q}$ and $G_{H,c}$ at the same point
$(s,t)\in\Om$. Each primitive vector with $0<x\leq1$ corresponds
to a positive return time $\tau=y/x$ of the horocycle orbit, and its
contributions to the two transforms are $H(c_Q\tau)$ and $H(c\tau)$.
For sufficiently large $Q$, we have $c_Q\in[c/2,2c]$.
If either contribution is nonzero, then $\tau\leq L$.
Since the first-return time is $R(s,t)=1/(st)\geq1$,
the first positive return occurs at time at least one and consecutive
returns are at least one unit apart. There are therefore at most
$\lceil L\rceil$ return times that can contribute to the difference,
uniformly in $(s,t)$.

Extend $H$ by zero to $[0,\infty)$ and denote its modulus of
continuity by
\[
 \omega_H(\delta)
 =\sup\{|H(u)-H(v)|:u,v\geq0,\ |u-v|\leq\delta\}.
\]
For each return time $\tau\leq L$,
\[
 |H(c_Q\tau)-H(c\tau)|
 \leq\omega_H\bigl(L|c_Q-c|\bigr).
\]
Summing over these return times gives
\[
 \sup_{(s,t)\in\Om}|G_{H,c_Q}(s,t)-G_{H,c}(s,t)|
 \leq\lceil L\rceil\,
       \omega_H\bigl(L|c_Q-c|\bigr).
\]
The right-hand side tends to zero because $c_Q\to c$ and $H$ is
uniformly continuous. This proves \eqref{eq:pair-uniform-c}.
\end{proof}

We are now ready to compute the limiting density. 

\Needspace{12\baselineskip}
\begin{theorem}[Boca--Zaharescu \cite{BocaZaharescu2005}]\label{thm:farey-pair}
For every real-valued $H\in C_c((0,\infty))$,
\begin{equation}\label{eq:pair-limit}
 \lim_{Q\to\infty}\mathcal R_Q^{(2)}(H)
 =\int_0^\infty H(\lambda)g_2(\lambda)\,d\lambda,
\end{equation}
where
\begin{equation}\label{eq:g2}
 g_2(\lambda)
 =\frac{6}{\pi^2\lambda^2}
 \sum_{1\leq j<\pi^2\lambda/3}
 \phi(j)
 \log\left(\frac{\pi^2\lambda}{3j}\right).
\end{equation}

\end{theorem}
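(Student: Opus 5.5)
The argument has three parts: reduce to an average over the periodic BCZ orbit, pass to the limit using equidistribution, and compute the limit with the first moment formula. By \cref{lem:pair-wrap-regularity}, for large $Q$ we have $\mathcal R_Q^{(2)}(H)=\int G_{H,c_Q}\,d\nu_Q$.

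\emph{Limit step.} We split
\[
\int G_{H,c_Q}\,d\nu_Q=\int G_{H,c}\,d\nu_Q+\int (G_{H,c_Q}-G_{H,c})\,d\nu_Q .
\]
The second term is bounded by the uniform estimate \eqref{eq:pair-uniform-c}, so it tends to zero.

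For the first term, $\nu_Q\to m$ weakly by \cite[Theorem~1.3]{AthreyaCheung2014}, and $m$ has density $2$ with respect to Lebesgue measure on $\Om$. The function $G_{H,c}$ is bounded, and its discontinuity set lies in a finite union of line segments, which is $m$-null. The portmanteau theorem for bounded functions that are continuous $m$-almost everywhere therefore gives $\int G_{H,c}\,d\nu_Q\to\int_\Om G_{H,c}\,dm$.

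This is the step I expect to need the most care. Weak convergence is tested on continuous functions on a compact space, while $\Om$ is not closed. I would argue on $\overline\Om$, using the extension by zero from \cref{lem:pair-wrap-regularity}. The atoms of $\nu_Q$ lie in $\Om$, so the extension does not change $\int G_{H,c}\,d\nu_Q$. The boundary of the triangle is $m$-null, so it does not change the limit either.

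\emph{Computation step.} Apply \cref{cor:modular-first} to $f_{H,c}$:
\[
\int_\Om G_{H,c}\,dm=2\sum_{j\ge1}\phi(j)\int_j^\infty\!\int_0^1 H(cy/x)\,\frac{dx\,dy}{y^2}.
\]
Substitute $\lambda=cy/x$ at fixed $y$. Then $x=cy/\lambda$ and $dx=cy\lambda^{-2}\,d\lambda$, and the constraint $0<x\le1$ becomes $\lambda\ge cy$. The integral becomes
\[
2\sum_j\phi(j)\int_j^\infty\int_{cy}^\infty H(\lambda)\,\frac{c}{y\lambda^2}\,d\lambda\,dy .
\]
Exchanging the order of integration by Tonelli (applied to $|H|$), the constraints $j\le y\le\lambda/c$ give
\[
\frac{2c}{\lambda^2}\sum_{j<\lambda/c}\phi(j)\log\frac{\lambda}{cj}.
\]
With $c=3/\pi^2$ we have $2c=6/\pi^2$ and $\lambda/c=\pi^2\lambda/3$, which is exactly $g_2(\lambda)$ in \eqref{eq:g2}. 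Only finitely many $j$ contribute for $\lambda$ in $\supp H$, and the term $j=\lambda/c$ contributes $\log 1=0$, so the strict inequality in the sum is harmless.
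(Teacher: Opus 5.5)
Your proposal is correct and follows essentially the same route as the paper. Both arguments rewrite $\mathcal R_Q^{(2)}(H)$ as an orbit average via \cref{lem:pair-wrap-regularity}, control $G_{H,c_Q}-G_{H,c}$ by the sup-norm estimate, and pass to $\int_\Om G_{H,c}\,dm$ by Athreya--Cheung equidistribution for a bounded, $m$-a.e.\ continuous function; then they evaluate that integral with \cref{cor:modular-first}, the substitution $\lambda=cy/x$, and a swap of integration order. Your remarks on working over $\overline\Om$ and on the harmless endpoint $j=\lambda/c$ only make explicit points the paper leaves implicit.
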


\begin{proof}
By \cref{lem:pair-wrap-regularity}, $G_{H,c}$ is bounded and its discontinuity set has $m$-measure zero. Thus the weak convergence $\nu_Q\to m$ from \cite[Theorem~1.3]{AthreyaCheung2014} implies convergence of its integrals against $G_{H,c}$. Moreover, for all sufficiently large $Q$,
\[
 \left|\mathcal R_Q^{(2)}(H)-\int_\Om G_{H,c}\,dm\right|
 \leq\|G_{H,c_Q}-G_{H,c}\|_\infty
 +\left|\int_\Om G_{H,c}\,d\nu_Q-\int_\Om G_{H,c}\,dm\right|.
\]
Both terms tend to zero, so
\begin{equation}\label{eq:pair-to-section}
 \lim_{Q\to\infty}\mathcal R_Q^{(2)}(H)
 =\int_\Om G_{H,c}\,dm.
\end{equation}
Now apply the modular first moment formula \eqref{eq:modular-first} to $f_{H,c}$:
\begin{align}\label{eq:pair-first-apply}
 \int_\Om G_{H,c}\,dm
 &=2\sum_{j\geq1}\phi(j)
 \int_j^\infty\int_0^1
 H\left(c\frac{y}{x}\right)\frac{1}{y^2}\,dx\,dy.
\end{align}
For fixed $y$, make the change of variables
\[
 \lambda=c\frac{y}{x},
 \qquad
 x=\frac{cy}{\lambda},
 \qquad
 |dx|=\frac{cy}{\lambda^2}d\lambda.
\]
As $x$ increases from $0$ to $1$, the variable $\lambda$ decreases from $+\infty$ to $cy$. Hence
\begin{equation}\label{eq:pair-inner-change}
 \int_0^1H\left(c\frac{y}{x}\right)dx
 =cy\int_{cy}^\infty H(\lambda)\frac{d\lambda}{\lambda^2}.
\end{equation}
Substitution into \eqref{eq:pair-first-apply} gives
\[
 \int_\Om G_{H,c}\,dm
 =2c\sum_{j\geq1}\phi(j)
 \int_j^\infty\int_{cy}^\infty
 H(\lambda)\frac{1}{y\lambda^2}\,d\lambda\,dy.
\]
Compact support of $H$ justifies changing the order of integration. For fixed $\lambda$, the $y$-range is $j\leq y\leq\lambda/c$, which has positive length exactly when $j<\lambda/c$. Therefore
\begin{align*}
 \int_\Om G_{H,c}\,dm
 &=2c\int_0^\infty\frac{H(\lambda)}{\lambda^2}
 \sum_{1\leq j<\lambda/c}\phi(j)
 \left(\int_j^{\lambda/c}\frac{dy}{y}\right)d\lambda\\
 &=2c\int_0^\infty\frac{H(\lambda)}{\lambda^2}
 \sum_{1\leq j<\lambda/c}\phi(j)
 \log\left(\frac{\lambda}{cj}\right)d\lambda.
\end{align*}
Finally, $c=3/\pi^2$, so $2c=6/\pi^2$ and $\lambda/c=\pi^2\lambda/3$. This is \eqref{eq:g2}.
\end{proof}

\begin{remark}[The interval where the density vanishes]
The sum in \eqref{eq:g2} is empty for $0<\lambda\leq3/\pi^2$. Hence
\[
 g_2(\lambda)=0,
 \qquad 0<\lambda\leq\frac3{\pi^2}.
\]
In this proof, the vanishing follows from $y\geq j\geq1$ and $x\leq1$, which imply $cy/x\geq c=3/\pi^2$.
\end{remark}

\end{document}